\theoremstyle{thmstyleone}%
\newtheorem{theorem}{Theorem}
\newtheorem{proposition}[theorem]{Proposition}%
\newtheorem{cor}{Corollary}
\newtheorem{remark}{Remark}%
\newtheorem{example}{Example}%
\theoremstyle{thmstyletwo}%
\theoremstyle{thmstylethree}%
\begin{document}

\title{Toeplitz operators with symmetric, alternating and anti-symmetric separately radial symbols on the unit ball }

\author{Armando S\'anchez-Nungaray\footnote{Universidad Veracruzana. Email: \texttt{armsanchez@uv.mx}} \and Jos\'e Rosales-Ortega\footnote{Universidad de Costa Rica, Email: \texttt{jose.rosales@ucr.ac.cr}} \and Carlos Gonz\'alez-Flores\footnote{Instituto Polit\'ecnico Nacional, Email: \texttt{cfgonzalez@esimez.mx}} }


%
%
%
%
%

\maketitle

\abstract{We  consider symmetric separately radial (with corresponding group $S_n\rtimes \mathbb{T}^n$) and alternating separately radial (with corresponding group $A_n\rtimes \mathbb{T}^n$) symbols, as well as the associated Toeplitz operators  on the weighted  Bergman spaces on the unit ball on $\mathbb{C}^n$. Using a purely representation theoretic approach we obtain that the $C^*$-algebras generated  by each family of such Toeplitz operators is commutative. Furthermore, we show that the symmetric separately radial Toeplitz operators are more general than radial Toeplitz operators, i.e., every radial Toeplitz operator is a symmetric separately radial.}

\hspace{-0.5cm}\textbf{keywords}: Unit ball, Toeplitz operator, Symmetric separately radial symbols, Commutative $C^*$-algebras.

\hspace{-0.5cm}\textbf{MSC Classification:} 47B35, 32A36, 22E46, 32M15.

\section{Introduction}\label{sec1}

\label{sec1}
During recent years an important object of study, in operator theory, has been  the  Algebra of Toeplitz operators acting on Bergman spaces on the unit ball $\mathbb{B}^n$ on $\mathbb{C}^n$, see  \cite{Bauer, Quiroga4, Grudsky1, Grudsky2, Quiroga00}. Of particular interest among these operators are the so-called radial and separately radial Toeplitz operators because they are $n$-dimensional generalizations of \cite{Quiroga1}, where the Toeplitz operators with radial symbols on the unit disk generate a commutative $C^*$-algebra.
The search for commutative $C^*$-algebras generated by Toeplitz operators, $T_a$, in the cases where the symbol $a \in L^{\infty}(\mathbb{B}^n)$ is separately radial or radial has been a recent topic of research as one can find in \cite{Quiroga4,Grudsky1}.

The existence of these types of symbols on the unit ball on $\mathbb{C}^n$,  as well as in other bounded domains, as  invariant functions  with respect to symmetric   subgroups of the biholomorphism  group of the corresponding domain was used to generate  commutative  $C^*$-algebras of  Toeplitz operators as we can see in  \cite{Trieu, Quiroga0, Quiroga3}. These results were based on the relationship between the analytic and the representation theoretic approach  to the study of Toeplitz operators whose symbols have symmetries. A fundamental idea here was that if a symbol is invariant under a subgroup, then the corresponding Toeplitz operator intertwines the action of the subgroup (see Proposition 3.1 from \cite{ Quiroga2}).

The principal goal in this work is to explore the relationship  between the representation theory, as established in \cite{Quiroga2, Quiroga3}, and the commutativity of the $C^*$-algebra generated for the Toeplitz operators with symbols invariant under the groups $S_n \rtimes \mathbb{T}^n $ and $ \mathbb{T}^n \rtimes A_n$, where $\mathbb{T}^n$ stands  for the $n$-dimensional torus, $S_n$ stands for the symmetric group of permutations of the set $\{1,2,\ldots,n\}$, and $A_n$ is the alternating group of $S_n$. It is important to point out that the types of symbols obtained by introducing these new subgroups of $U(n)$, the unitary group,  generate new commutative algebras which have not been analyzed in current works on Toeplitz operators.

For the case of the symmetric separately radial symbols that correspond to the subgroup $S_n \rtimes \mathbb{T}^n $ of the biholomorphism group of the unit ball $\mathbb{B}^n$ on $\mathbb{C}^n$, we define a  unitary operator $R $ that allows us to exhibit the simultaneous diagonalization of the Toeplitz operators with symmetric separately radial symbols into multiplication operators on $\ell^2(\mathbb{Z}^n_+)$. The representation  theoretic approach  relies on Schur's lemma in the same way as found in \cite{Quiroga2} and it allows us to prove that the Toeplitz operators with symmetric separately radial symbols satisfy orthogonality relations. Also, the functions for the multiplication operators unitarily equivalent  to Toeplitz operators with symmetric separately radial symbols  are proved to be constant on the multi-indices that belong to the same $\mathcal{I}_{\iota}$. We refer to Section \ref{sec3}  for further details and for the moment $\mathcal{I}_{\iota}$ denotes an multi-indices set.
It is important to mention that symmetric separately radial symbols are more general than separately radial symbols. We present, in Section \ref{sec5}, an explicit  example to show this.

A corresponding study is made for alternating separately radial symbols, i.e. symbols that are $A_n \rtimes \mathbb{T}^n$-invariant, and for symbols that are antisymmetric separately radial.

We now describe the structure of the paper. In Section \ref{sec2} we explain some notations that are standard in the literature . We introduce various Lie groups  which  will be used throughout. In Section \ref{sec3},  for the case of symmetric separately radial symbols that correspond to the subgroup $S_n \rtimes \mathbb{T}^n$, we prove  the commutativity of the $C^*$-algebra of bounded operators on the weighted Bergman space of holomorphic functions  that intertwine the representation of $S_n\rtimes \mathbb{T}^n$. In Section \ref{sec4},  for the case of alternating separately radial symbols that correspond to the subgroup $A_n \rtimes \mathbb{T}^n$ we prove  an analogous result of commutativity. In Section \ref{sec5}  we prove that every Toeplitz operator with  symmetric separately radial  symbol is unitarily equivalent to a multiplication operator which is explicitly given. In Section \ref{sec6}  we prove that every Toeplitz operator with  alternating separately radial  symbol is unitarily equivalent to a multiplication operator which is explicitly given. In Section \ref{sec7} we define anti-symmetric  separately radial symbols and collect several properties of Toeplitz operators with such  symbols.

\section{Preliminaries}\label{sec2}
In 2006, Quiroga-Barranco (see \cite{Quiroga2}), using a representation theory approach, showed that the algebra of operators that intertwines a unitary representation over a Hilbert space is a commutative $C^*$-algebra. This was done by explicitly exhibiting an isomorphism with $\ell^{\infty}(J)$, where $J$  is some set of indices.The idea to prove this is to build for each $T$ that intertwines the representation  a function $\hat{\gamma}_T$.
Using the groups $\mathbb{T}^n$ and $U(n)$ appropriately, it performs explicit calculations with the Toeplitz operators generated by the separately radial and radial symbols, respectively.

This new technique of using representation theory in the study of Toeplitz operators acting on Bergman spaces has turned out to be shorter, more elementary, and more elucidating. It has also been adapted to other contexts such as projective spaces, Fock spaces and Cartan domains, among others. We refer to \cite{Bauer,Bauer2, dewage,Quiroga4, Esmerald1,  Grudsky1, Trieu, Grudsky2,  Quiroga0, Quiroga00,Quiroga3,  Quiroga7} for examples of research on Toeplitz operators with separately radial, radial symbols, and other types of symbols.

For the sake of completeness we present, in the next two remarks,  a summary of the ideas discussed by Quiroga-Barranco in \cite{Quiroga2}, see also \cite{Quiroga3, Quiroga6}.

 \begin{remark}
	Let $H$ be a Lie group and $\pi$ a unitary  representation on a Hilbert space $\mathcal{H}$. Suposse that $\mathcal{H}$ contains a dense subspace $V$ that can be algebraically decomposed as
	$$V= \sum_{i \in J} \mathcal{H}_j,$$
	where the subspaces $\mathcal{H}_j$ are mutually orthogonal, closed in $\mathcal{H}$ and irreducible $H$ invariant modules. Then if $H_{j_1}$ and $H_{j_2}$ are not isomorphic as $H$-modules for $j_1 \neq j_2$, then  the algebra of intertwining operators, denoted by $\mbox{End}_H(\mathcal{H})$, is commutative, and $\mathcal{H} =\bigoplus_{i \in J} \mathcal{H}_j$.
\end{remark}

\begin{remark}
	Choose $(e_l)_{l\in L}$ an orthonormal basis of $\mathcal{H}$ for which we have a disjoint union
	$$L=\bigcup_{j\in J}L_j,$$
	so that for every $j\in J$ the set $(e_l)_{l\in L_j}$ is an orthonormal basis for $\mathcal{H}_j$. We can consider the operator $R:\mathcal{H} \to \ell^2(\mathbb{N})$ and its adjoint operator $R^*$ such that every $T\in \mbox{End}_H(\mathcal{H})$ is unitarily equivalent to $RTR^*$, which is the multiplication operator on $\ell^2(L)$ given by the function $\gamma_T:L\to \mathbb{C}$ where $\gamma_T(l)=<T(e_l),e_l>$. Furthermore, the function $\gamma_T$ is constant  on $L_j$ for every $j \in J$ and so induces a function $\hat{\gamma}_T:J \to \mathbb{C}$ such that $\hat{\gamma}_T(j)=\gamma_T(l)$ whenever $i \in L_j$. In particular, the assignment $T \mapsto \hat{\gamma}_T$ is an isomorphism.
\end{remark}

Now we proceed to define the standard notation that will be used throughout this work.

Let  $\mathbb{B}^n=\{z=(z_1,\ldots,z_n)\in \mathbb{C}^n: \vert z \vert^2= \vert z_1 \vert^2 + \cdots + \vert z_n \vert^2 < 1 \}$ be the unit ball in $\mathbb{C}^n$. Using the convention from \cite{Zhu1}, given a multi-index $m=(m_1,\ldots,m_n) \in  \mathbb{Z}_+^n$ and $z=(z_1,\ldots,z_n)\in \mathbb{C}^n$ we have
\begin{eqnarray*}
	\vert m\vert &=& m_1 + m_2 + \cdots +m_n,   \\
	m ! &= & m_1!m_2!\cdots m_n!, \\
	z^{m} &=& z_1^{m_1}z_2^{m_2}\cdots z_n^{m_n}.
\end{eqnarray*}

For every $\alpha > -1$ we consider the weighted measure $$dv_{\alpha}(z) = c_{\alpha} \left( 1-\vert z \vert^2 \right)^{\alpha} \, dv(z),$$ where the constant $c_{\alpha}=\displaystyle  \frac{\Gamma(n+\alpha + 1)}{n! \Gamma(\alpha +1)}$ is chosen so that $v_{\alpha}(\mathbb{B}^n)=1$ and $dv(z)$ is the usual Lebesgue measure in $\mathbb{C}^n$. The weighted Bergman space $\mathcal{H}^2_{\alpha}(\mathbb{B}^n)$
is the subspace of holomorphic functions that lie in $L^2(\mathbb{B}^n, v_{\alpha})$.

For every $a\in L^{\infty}(\mathbb{B}^n)$ the Toeplitz operator on the weighted Bergman space $\mathcal{H}^2_{\alpha}(\mathbb{B}^n)$ is defined by $T_a:\mathcal{H}^2_{\alpha}(\mathbb{B}^n) \to \mathcal{H}^2_{\alpha}(\mathbb{B}^n)$ with
$$T_a(f)(z)=\int_{\mathbb{B}^n}a(w)f(w) (1-z\cdot \overline{w})^{-(n+\alpha + 1)}\, dv_{\alpha}(w). $$

In this case, $a$ is called the symbol of the Toeplitz operator $T_a$.

From now on we denote by $S_n$ the symmetric group of the set $\{1,\ldots, n\}$. Denote by $A_n$ the subgroup of $S_n$ formed by the even permutations.

Let $\mathbb{T}$ be the group of complex numbers with modulus equal to one. Let $\mathbb{T}^n = \mathbb{T}\times \cdots \times \mathbb{T}$.

The semi-direct group, $S_n \rtimes \mathbb{T}^n$ is a Lie group with operation given by $(\sigma, z)( \tau, w)= (\sigma \circ \tau, [\pi(\tau^{-1})z]w)$, where $\pi: S_n \to Aut(\mathbb{T}^n)$ is defined by the formula  $\pi(\sigma)(z)=(z_{\sigma^{-1}(1)}, \ldots, z_{\sigma^{-1}(n)})$. In a completely analogous way, we define
$A_n \rtimes \mathbb{T}^{n}$.

The action of $S_n \rtimes \mathbb{T}^n$ on $\mathbb{B}^n$ is given by
$$(\sigma, t) z=(t_1z_{\sigma(1)},\ldots, t_nz_{\sigma(n)}).$$

Let $N$ be a subgroup of $U(n)$. We define, $\mathcal{A}_N$, as the set of symbols $a\in L^{\infty}(\mathbb{B}^n)$ that satisfies $a(hz)=a(z)$ for all $h \in N$, and $z\in \mathbb{B}^n $. We define $\mathcal{T}^N=< T_a: a\in \mathcal{A}_N>$, i.e, the algebra generated  by the Toeplitz operators with symbol $a \in \mathcal{A}_N$.

We also define the following sets of symbols and their respective algebras generated by Toeplitz operators: $\mathcal{A}_{\mathbb{T}^{n}}$  and $\mathcal{T}^{\mathbb{T}^{n}}=<T_a: a\in \mathcal{A}_{\mathbb{T}^{n}}>$; $\mathcal{A}_{S_n \rtimes \mathbb{T}^n}$  and $\mathcal{T}^{S_n \rtimes \mathbb{T}^n}=< T_a: a\in \mathcal{A}_{S_n \rtimes \mathbb{T}^n}>$; $\mathcal{A}_{A_n \rtimes \mathbb{T}^{n}}$  and $\mathcal{T}^{A_n \rtimes \mathbb{T}^{n}}=< T_a: a\in \mathcal{A}_{A_n \rtimes \mathbb{T}^{n}}>$.

We have, for every $\alpha > -1$, a unitary representation $\pi_{\alpha}: S_n \rtimes \mathbb{T}^n \to U(\mathcal{H}^2_{\alpha}(\mathbb{B}^n))$  given by
$$(\pi_{\alpha}(\sigma,w))(f)(z)=f((\sigma,w)^{-1}z).$$

The following result identifies the type of Toeplitz operators that intertwines the previous representation $\pi_{\alpha}$, that is $T_a(\pi_{\alpha}(h)f)= \pi_{\alpha}(h)T_a(f)$, where $h\in S_n \rtimes \mathbb{T}^n$, and $f\in \mathcal{H}^2_{\alpha}(\mathbb{B}^n)$. We denote by $\mbox{End}_{ S_n\rtimes \mathbb{T}^n } (\mathcal{H}^2_{\alpha}(\mathbb{B}^n))$ the algebra of operators $T:\mathcal{H}^2_{\alpha}(\mathbb{B}^n)\to \mathcal{H}^2_{\alpha}(\mathbb{B}^n)$  that intertwine the representation $\pi_{\alpha}$.

\begin{proposition}{\label{prop1}}
	If $a\in L^{\infty}(\mathbb{B}^n)$ satisfies $a\circ ( \sigma, z) = a$, for every $( \sigma, z) \in S_n \rtimes \mathbb{T}^n$, then, for every $\alpha > -1$, the Toeplitz operator $T_a$ is an element of $ \mbox{End}_{ S_n\rtimes \mathbb{T}^n } (\mathcal{H}^2_{\alpha}(\mathbb{B}^n))$.
\end{proposition}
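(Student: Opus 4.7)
The plan is to verify the intertwining identity $T_a \pi_\alpha(h) = \pi_\alpha(h) T_a$ directly from the integral formula for $T_a$ by performing a change of variables, for every $h=(\sigma,t) \in S_n\rtimes\mathbb{T}^n$. The crucial observation is that the action $(\sigma,t)\cdot z = (t_1 z_{\sigma(1)},\ldots,t_n z_{\sigma(n)})$ embeds $S_n\rtimes\mathbb{T}^n$ as a subgroup of $U(n)$: each such map is a composition of a coordinate permutation with a diagonal unitary, which is manifestly unitary on $\mathbb{C}^n$.

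From this unitarity I extract three properties that drive the calculation: (i) the measure $dv_\alpha$ is $h$-invariant, since both the Lebesgue measure $dv$ and the weight $(1-|w|^2)^\alpha$ depend only on $|w|$; (ii) the Hermitian form transforms as $z\cdot \overline{hw} = h^{-1}z \cdot \overline{w}$, because $h^*=h^{-1}$; and (iii) the symbol satisfies $a(hw)=a(w)$ by hypothesis. Writing out both sides,
\begin{align*}
\pi_\alpha(h) T_a(f)(z) &= T_a(f)(h^{-1}z) = \int_{\mathbb{B}^n} a(w)\,f(w)\,(1-h^{-1}z\cdot \overline{w})^{-(n+\alpha+1)}\, dv_\alpha(w),\\
T_a(\pi_\alpha(h)f)(z) &= \int_{\mathbb{B}^n} a(w)\,f(h^{-1}w)\,(1-z\cdot \overline{w})^{-(n+\alpha+1)}\, dv_\alpha(w),
\end{align*}
and in the second integral I substitute $w=hw'$. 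By (i) the measure is unchanged, by (iii) $a(hw')=a(w')$, the factor $f(h^{-1}w)=f(w')$, and by (ii) the reproducing-kernel factor becomes $(1-h^{-1}z\cdot\overline{w'})^{-(n+\alpha+1)}$. The two expressions coincide, establishing the intertwining property.

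The only mildly delicate step is keeping track of inverses in the semidirect product, since $(\sigma,t)^{-1}$ is not $(\sigma^{-1},\overline{t})$ but $(\sigma^{-1},\pi(\sigma)(\overline{t}))$ in the conventions set in Section \ref{sec2}. However, this bookkeeping is irrelevant to the argument above: all that is used is that $h$ and $h^{-1}$ act as mutually inverse unitary transformations on $\mathbb{C}^n$, which is automatic. In fact, the proof is an instance of the general mechanism (explicitly noted in \cite{Quiroga2}) that whenever a symbol is invariant under a subgroup of biholomorphisms preserving the reproducing kernel, the induced Toeplitz operator intertwines the corresponding representation.
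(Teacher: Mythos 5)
Your proof is correct, and it is essentially the argument the paper intends: the paper does not write out a proof but only remarks that one uses the $U(n)$-invariance (hence $S_n\rtimes\mathbb{T}^n$-invariance) of $v_\alpha$ together with the references \cite{Quiroga44, Quiroga2, Quiroga3}, and your change-of-variables computation, resting on the measure invariance, the kernel identity $z\cdot\overline{hw}=h^{-1}z\cdot\overline{w}$ for unitary $h$, and the invariance of the symbol, is exactly that standard argument spelled out. Your observation that the semidirect-product bookkeeping for $(\sigma,t)^{-1}$ is irrelevant because only the unitarity of the action on $\mathbb{C}^n$ is used is also accurate.
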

\begin{remark}
The proof of  Proposition \ref{prop1} uses the fact that the measure $v_{\alpha}$ is U(n)-invariant and so $S_n \rtimes \mathbb{T}^n$-invariant as well, where $U(n)$ is the unitary group (see  \cite{Quiroga44, Quiroga2, Quiroga3}).
\end{remark}

\section{ $S_n \rtimes \mathbb{T}^n$-intertwining operators}{\label{sec3}}

In this section, we show that the algebra $\mbox{End}_{ S_n\rtimes \mathbb{T}^n } (\mathcal{H}^2_{\alpha}(\mathbb{B}^n))$  of bounded operators on $\mathcal{H}^2_{\alpha}(\mathbb{B}^n)$ that intertwine the representation of $S_n \rtimes \mathbb{T}^n$ is commutative by constructing it  as an algebra  of multiplication operators. Our main tool is to express the space $\mathcal{H}^2_{\alpha}(\mathbb{B}^n)$ as a direct sum of suitable Hilbert spaces. This is achieved by isotopic decomposition. For more details see \cite{Quiroga6}.

In what follows we will use the multi-index notation without further mention, as appear in previous section, for polynomials functions on $\mathbb{C}^n$.

Inspired by the methods used in \cite{Quiroga2,Quiroga3}, we define the set of indices
$$  \mathcal{I}=\{\iota=(\iota_1,\ldots,\iota_n)\in \mathbb{Z}_+^n: \iota_1\geq \iota_2\geq \cdots \geq \iota_n \geq 0  \}.$$

For each $\iota\in \mathcal{I}$, we also define the set:
$$ \mathcal{I}_\iota=\{ \sigma(\iota)=(\iota_{\sigma(1)}, \ldots, \iota_{\sigma(n)})\in \mathbb{Z}_+^n: \sigma \in S_n\}.$$

Let us denote by
$\mathcal{P}_\iota(\mathbb{C}^n)$ the space of polynomials
generated by the monomials of the form
$$ z^{\sigma(\iota)}= z_1^{\iota_{\sigma(1)}}\cdots z_n^{\iota_{\sigma(n)}},$$
where $\sigma \in S_n $.

This new set of indices, to the best our knowledge, has not been used before to define symbols that generate new Toeplitz operators as will be shown later.

For the sake of clarity, we present in the following table, the decomposition associated the first indices
in dimension $n=3$.

For $n \geq 4$ the description of this set of indices becomes a bit more cumbersome.

{\tiny \begin{table}[htp]
		\caption{}\label{<table1>}%
		\begin{tabular}{|c|c|c|c|}
			\hline
			Degree &Elements of $\mathcal{I}$ & $\mathcal{I}_{\iota}$& $\mathcal{P}_\iota(\mathbb{C}^3)$\\
			\hline
			0 & (0,0,0)&(0,0,0) &1\\
			\hline
			1 & (1,0,0)&(1,0,0),(0,1,0),(0,0,1) &$z_1,z_2,z_3$\\
			\hline
			2 & (2,0,0)&(2,0,0),(0,2,0),(0,0,2) &$z_1^2,z_2^2,z_3^2$\\
			\hline
			2 & (1,1,0)&(1,1,0),(1,0,1),(0,1,1) &$z_1z_2, z_1z_3, z_2z_3$\\
			\hline
			3 & (3,0,0)&(3,0,0),(0,3,0),(0,0,3) &$z_1^3,z_2^3,z_3^3$\\
			\hline
			3 & (2,1,0)&(2,1,0),(2,0,1),(1,2,0)&$z_1^2 z_2, z_1^2 z_3, z_2^2 z_1$\\
			& &(0,2,1), (1,0,2),(0,1,2)&$z_2^2 z_3, z_3^2 z_1, z_3^2 z_2$\\
			\hline
			3 & (1,1,1)&(1,1,1) &$z_1z_2z_3$\\
			\hline
			4 & (4,0,0)&(4,0,0),(0,4,0),(0,0,4) &$z_1^4,z_2^4,z_3^4$\\
			\hline
			$k_1+k_2+k_3$ & $(k_1,k_2,k_3)$&$(k_1,k_2,k_3),(k_1,k_3,k_2)$&$z_1^{k_1} z_2^{k_2} z_3^{k_3}, z_1^{k_1} z_2^{k_3} z_3^{k_2}$\\
			& & $(k_2,k_1,k_3),(k_2,k_3,k_1)$ &$ z_1^{k_2}z_2^{k_1}z_3^{k_3}, z_1^{k_2}z_2^{k_3}z_3^{k_1} $\\
			&  &$(k_3,k_1,k_2),(k_3,k_2,k_1)$&$z_1^{k_3} z_2^{k_1} z_3^{k_2}, z_1^{k_3} z_2^{k_2} z_3^{k_1}$\\
			\hline
			$2k_1+k_2$ & $(k_1,k_1,k_2)$ & $(k_1,k_1,k_2),(k_1,k_2,k_1)$ & $z_1^{k_1} z_2^{k_1} z_3^{k_2},z_1^{k_1} z_2^{k_2} z_3^{k_1}$ \\
			& & $(k_2,k_1,k_1)$ & $z_1^{k_2} z_2^{k_1} z_3^{k_1}$ \\
			\hline
			$k_1+2k_2$ & $(k_1,k_2,k_2)$ & $(k_1,k_2,k_2),(k_2,k_1,k_2)$ & $z_1^{k_1} z_2^{k_2} z_3^{k_2},z_1^{k_2} z_2^{k_1} z_3^{k_2}$\\
			& & $ (k_2,k_2,k_1)$ & $z_1^{k_2} z_2^{k_2} z_3^{k_1}$\\
			\hline
			$3k_1$ & $(k_1,k_1,k_1)$ & $(k_1,k_1,k_1)$ & $z_1^{k_1} z_2^{k_1} z_3^{k_1}$\\
			\hline
		\end{tabular}
		\label{default}
\end{table}}

It is well known that the space $\mathcal{P}(\mathbb{C}^n)$ is dense and $U(n)$-invariant in $\mathcal{H}^2_{\alpha}(\mathbb{B}^n)$ for every $\alpha > -1$ (see for example \cite{Nollan1}).

\begin{proposition}{\label{prop2}}
	The decomposition of  $\mathcal{P}(\mathbb{C}^n)$ into irreducible $S_n\rtimes \mathbb{T}^n$-modules is given by
	$$\mathcal{P}(\mathbb{C}^n)=\sum_{\iota\in \mathcal{I}} \mathcal{P}_\iota(\mathbb{C}^n)$$
	More precisely, for every $\iota\in \mathcal{I}$, the space $\mathcal{P}_\iota(\mathbb{C}^n)$ is an irreducible $S_n\rtimes \mathbb{T}^n$
	-submodule.
	Moreover, for $\iota_1,\iota_2\in \mathcal{I}$ we have $\mathcal{P}_{\iota_1}(\mathbb{C}^n) \ncong \mathcal{P}_{\iota_2}(\mathbb{C}^n) $
	as $S_n\rtimes \mathbb{T}^n$-modules and $\mathcal{P}_{\iota_1}(\mathbb{C}^n) \perp \mathcal{P}_{\iota_2}(\mathbb{C}^n) $
	whenever $\iota_1\neq \iota_2$.

	In particular, for every $\alpha > -1$ we have
	$$\mathcal{H}^2_{\alpha}(\mathbb{B}^n)=\bigoplus_{\iota\in \mathcal{I}} \mathcal{P}_{\iota}(\mathbb{C}^n)$$
	as an orthogonal direct sum of Hilbert spaces that yields the decomposition of $\mathcal{H}^2_{\alpha}(\mathbb{B}^n)$
	into irreducible $S_n\rtimes \mathbb{T}^n$-modules.
\end{proposition}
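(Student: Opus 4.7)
The plan is to prove the four assertions (generation by the $\mathcal{P}_\iota$, irreducibility, pairwise non-isomorphism, and orthogonality) in a single stroke by using the torus $\mathbb{T}^n \subset S_n \rtimes \mathbb{T}^n$ as a diagonalizing device; the permutation factor $S_n$ then only has to permute the resulting weight vectors.

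First I would verify the algebraic decomposition $\mathcal{P}(\mathbb{C}^n) = \sum_{\iota \in \mathcal{I}} \mathcal{P}_\iota(\mathbb{C}^n)$. Every $m \in \mathbb{Z}_+^n$ admits a unique non-increasing rearrangement $\iota \in \mathcal{I}$, which means the $S_n$-orbits $\mathcal{I}_\iota$ partition $\mathbb{Z}_+^n$. Since the monomials $z^m$ form a basis of $\mathcal{P}(\mathbb{C}^n)$, this partition induces a direct-sum decomposition by the $\mathcal{P}_\iota(\mathbb{C}^n)$. Invariance under $S_n \rtimes \mathbb{T}^n$ is immediate from the definition of the action: the torus part acts on $z^m$ by a scalar, while a permutation $\sigma$ sends $z^{\tau(\iota)}$ to a monomial of the form $z^{\sigma^{-1}\tau(\iota)}$, which again lies in $\mathcal{P}_\iota$.

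Next I would establish irreducibility of each $\mathcal{P}_\iota$. The key observation is that the monomials $z^{\sigma(\iota)}$, $\sigma(\iota) \in \mathcal{I}_\iota$, are weight vectors for $\mathbb{T}^n$ with pairwise distinct weights, so they span one-dimensional weight spaces in $\mathcal{P}_\iota$. Hence if $W \subset \mathcal{P}_\iota$ is a nonzero $(S_n \rtimes \mathbb{T}^n)$-invariant subspace, integration against the characters of $\mathbb{T}^n$ (i.e.\ projection onto weight spaces) forces $W$ to contain at least one monomial $z^{\sigma(\iota)}$; then the $S_n$-action generates the remaining monomials of $\mathcal{P}_\iota$, so $W = \mathcal{P}_\iota$. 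For pairwise non-isomorphism, the set of $\mathbb{T}^n$-weights occurring in $\mathcal{P}_\iota$ is exactly $\mathcal{I}_\iota$, and for $\iota_1 \ne \iota_2$ one has $\mathcal{I}_{\iota_1} \cap \mathcal{I}_{\iota_2} = \emptyset$, so the restrictions of the two representations to $\mathbb{T}^n$ already fail to be equivalent.

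For orthogonality, I would use that the measure $dv_\alpha$ is $\mathbb{T}^n$-invariant, so monomials with distinct multi-indices are orthogonal in $\mathcal{H}^2_\alpha(\mathbb{B}^n)$ (alternatively, distinct weight spaces of a unitary representation of $\mathbb{T}^n$ are orthogonal). Consequently $\mathcal{P}_{\iota_1} \perp \mathcal{P}_{\iota_2}$ whenever $\iota_1 \ne \iota_2$. Finally, since $\mathcal{P}(\mathbb{C}^n)$ is dense in $\mathcal{H}^2_\alpha(\mathbb{B}^n)$ and the algebraic decomposition is orthogonal with finite-dimensional (hence closed) summands $\mathcal{P}_\iota$, taking the Hilbert space closure yields the orthogonal direct sum decomposition of $\mathcal{H}^2_\alpha(\mathbb{B}^n)$ and preserves the irreducibility and pairwise non-isomorphism already established at the algebraic level. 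I do not anticipate any serious obstacle; the one place requiring care is the irreducibility argument, where one must use that the $\mathbb{T}^n$-weight spaces inside $\mathcal{P}_\iota$ are one-dimensional so that the $\mathbb{T}^n$-isotypic projections extract actual basis monomials, making the subsequent $S_n$-transitivity argument possible.
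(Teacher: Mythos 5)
Your proposal is correct and follows essentially the same route as the paper: the action formula $(\sigma,t)\cdot z^m = t^{-\sigma(m)}z^{\sigma(m)}$, the disjointness of the orbit sets $\mathcal{I}_\iota$ for orthogonality and $\mathbb{T}^n$-weight inequivalence for non-isomorphism, and density of $\mathcal{P}(\mathbb{C}^n)$ to pass to $\mathcal{H}^2_\alpha(\mathbb{B}^n)$. The only difference is that you spell out the irreducibility step (one-dimensional weight spaces plus transitivity of $S_n$ on $\mathcal{I}_\iota$) which the paper asserts without detail, so your write-up is, if anything, more complete.
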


\begin{proof}
	
	Note that for each monomial $z^m$ with $m\in \mathbb{Z}_+^n$ the action
	
	\begin{eqnarray}\label{Action on monomials}
		(\sigma,t)\cdot z^m=(\sigma^{-1}(t^{-1}z))^m&=&(t_{\sigma^{-1}(1)}^{-1}z_{\sigma^{-1}(1)} )^{m_1}\cdots (t_{\sigma^{-1}(n)}^{-1}z_{\sigma^{-1}(n)} )^{m_n}\\  \nonumber
		& =&t_{\sigma^{-1}(1)}^{-m_1}   \cdots t_{\sigma^{-1}(n)}^{-m_n} z_{\sigma^{-1}(1)}^{m_1} \cdots z_{\sigma^{-1}(n)}^{m_n}\\  \nonumber
		& =&t_{1}^{-m_{\sigma(1)}}   \cdots t_{n}^{-m_{\sigma(n)}} z_{1}^{m_{\sigma(1)}}   \cdots z_{n}^{m_{\sigma(n)}}\\ \nonumber&=&  t^{-\sigma(m)}z^{\sigma(m)}
	\end{eqnarray}
	where  $\sigma(m)=(m_{\sigma(1)},\ldots, m_{\sigma(n)})\in \mathbb{Z}_+^n$.
	
	From the above equation it is clear that the spaces $\mathcal{P}_{\iota}(\mathbb{C}^n)$ are $S_n\rtimes \mathbb{T}^n$-invariant. Such a sum yields a decomposition
	into irreducible $S_n\rtimes \mathbb{T}^n$-modules.

	Now, we consider $\iota_1,\iota_2\in \mathcal{I}$ with $\iota_1\neq \iota_2$,  $m\in \mathcal{I}_{\iota_1}$ and $m'\in \mathcal{I}_{\iota_2}$ then
	\begin{eqnarray*}
		\langle (\sigma,t)\cdot z^m, z^{m'}\rangle=t^{-\sigma(m)}\langle z^{\sigma(m)}, z^{m'}\rangle=0
	\end{eqnarray*}
	for each $(\sigma,t) \in S_n\rtimes \mathbb{T}^n$, since $\sigma(m)\in \mathcal{I}_{\iota_1}$ and $\mathcal{I}_{\iota_1}\cap \mathcal{I}_{\iota_2}= \emptyset$.
	
	Since the isomorphism class of an irreducible $S_n\rtimes \mathbb{T}^n$-module is determined by (\ref{Action on monomials}) (see \cite{Brocker}), it is clear  that $\mathcal{P}_{\iota_1}(\mathbb{C}^n) \ncong \mathcal{P}_{\iota_2}(\mathbb{C}^n) $
	as $S_n\rtimes \mathbb{T}^n$-modules when $\iota_1\neq \iota_2$.
	
	The result follows by using remark 2 of section \ref{sec2}.
	
\end{proof}

Proposition \ref{prop2} allows us to apply results  from  Section 2 in \cite{Quiroga2}.

For every $\alpha > -1$, it is  well known that the following functions form an   orthonormal basis of $\mathcal{H}^2_{\alpha}(\mathbb{B}^n)$
$$e_m(z)=\sqrt{\frac{\Gamma(n+\vert m\vert +\alpha +1 ) }{m!\Gamma(n +\alpha +1 )}}z^m,$$
for more details we can see \cite{Zhu1}.

In particular,  for each $m\in \mathcal{I}_{\iota}$ we have that $m!=\iota!$ and $\vert m\vert=\vert \iota\vert$, therefore we obtain
$$e_m(z)=\sqrt{\frac{\Gamma(n+\vert \iota\vert +\alpha +1 ) }{\iota!\Gamma(n +\alpha +1 )}}z^m.$$
In other words, the factor of normalization of the monomial in the space $\mathcal{P}_{\iota}(\mathbb{C}^n)$ is the same, i.e.,
\begin{eqnarray*}
	\langle  z^m, z^{m}\rangle=\langle z^{\iota}, z^{\iota}\rangle=\frac{\iota!\Gamma(n +\alpha +1 )}{\Gamma(n+\vert \iota\vert+\alpha +1 ) }.
\end{eqnarray*}

\begin{theorem}{\label{Teor3} }
	For every $\alpha > -1$, the algebra $\mbox{End}_{S_n\rtimes \mathbb{T}^n} (\mathcal{H}^2_{\alpha}(\mathbb{B}^n))$ is commutative.
	More precisely, with the above notation and for the unitary map
	\begin{eqnarray*}
		R :& \mathcal{H}^2_{\alpha}(\mathbb{B}^n)\longrightarrow \ell^2(\mathbb{Z}^n_+) \label{R}\\
		&R(f) = (\langle f , e_{m}\rangle)_{m\in\mathbb{Z}^n_+ }  ,
	\end{eqnarray*}
	every operator $T\in \mbox{End}_{S_n\rtimes \mathbb{T}^n} (\mathcal{H}^2_{\alpha}(\mathbb{B}^n))$ is unitarily equivalent to $RT R^*$, which is the
	multiplication operator on $\ell^2(\mathbb{Z}^n_+)$,
	by the function
	\begin{eqnarray*}
		&\gamma_T : \mathbb{Z}^n_+ \longrightarrow \mathbb{C}\\   &\gamma_T(m)=\langle T e_{m}, e_{m} \rangle
	\end{eqnarray*}
	Furthermore, let us choose for every $\iota\in \mathcal{I}$ a unitary vector $u_{\iota} \in  P_{\iota}(\mathbb{C}^n)$ and
	consider the function
	\begin{eqnarray*}
		&\hat{\gamma}_T : \mathcal{I} \longrightarrow \mathbb{C}\\   &\hat{\gamma}_T(\iota)=\langle T u_{\iota}, u_{\iota} \rangle
	\end{eqnarray*}
	In particular, we can consider
	$$u_{\iota}=e_{\iota}(z)=\sqrt{\frac{\Gamma(n+\vert \iota\vert +\alpha +1 ) }{\iota!\Gamma(n +\alpha +1 )}}z^{\iota}.$$
	Then, we have
	$\hat{\gamma}_T(\iota)=\gamma_T(m)$
	for every $m\in \mathcal{I}_{\iota}$.
	Moreover, $ \gamma_T(m)=\gamma_T(m')$ whenever  $m, m'\in \mathcal{I}_{\iota}$.
\end{theorem}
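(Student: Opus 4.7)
The plan is to apply the abstract representation-theoretic framework recalled in Remarks 1 and 2 to the isotypic decomposition furnished by Proposition \ref{prop2}. That proposition already exhibits $\mathcal{H}^2_\alpha(\mathbb{B}^n) = \bigoplus_{\iota \in \mathcal{I}} \mathcal{P}_\iota(\mathbb{C}^n)$ as an orthogonal direct sum of pairwise non-isomorphic irreducible $S_n \rtimes \mathbb{T}^n$-submodules, so the commutativity of $\mbox{End}_{S_n\rtimes\mathbb{T}^n}(\mathcal{H}^2_\alpha(\mathbb{B}^n))$ is an immediate consequence of Remark 1. Nothing new has to be proved for this first assertion.

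Next I would verify that the orthonormal basis $(e_m)_{m\in \mathbb{Z}_+^n}$ is compatible with the decomposition of Proposition \ref{prop2}, in the sense that the partition $\mathbb{Z}_+^n = \bigsqcup_{\iota \in \mathcal{I}} \mathcal{I}_\iota$ matches the decomposition so that, for each $\iota$, the family $(e_m)_{m \in \mathcal{I}_\iota}$ is an orthonormal basis of $\mathcal{P}_\iota(\mathbb{C}^n)$. This is transparent from the explicit formula $e_m(z) = \sqrt{\Gamma(n+|\iota|+\alpha+1)/(\iota!\,\Gamma(n+\alpha+1))}\, z^m$ valid for $m \in \mathcal{I}_\iota$, since $\{z^m : m \in \mathcal{I}_\iota\}$ is by construction a basis of $\mathcal{P}_\iota(\mathbb{C}^n)$ and the monomials are orthogonal in $\mathcal{H}^2_\alpha(\mathbb{B}^n)$. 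We are thus in precisely the setup of Remark 2 with $L = \mathbb{Z}_+^n$, $J = \mathcal{I}$ and $L_\iota = \mathcal{I}_\iota$, so the unitary equivalence between $T$ and the multiplication operator by $\gamma_T(m) = \langle Te_m, e_m \rangle$ on $\ell^2(\mathbb{Z}_+^n)$ falls out directly.

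The concrete content of the theorem is the identity $\gamma_T(m) = \hat{\gamma}_T(\iota) = \langle Tu_\iota, u_\iota\rangle$ for every $m \in \mathcal{I}_\iota$ and every unit vector $u_\iota \in \mathcal{P}_\iota(\mathbb{C}^n)$. Since $T$ intertwines the representation, it preserves the isotypic components; because each $\mathcal{P}_\iota(\mathbb{C}^n)$ is irreducible and the distinct summands are pairwise non-isomorphic, Schur's lemma forces $T|_{\mathcal{P}_\iota(\mathbb{C}^n)} = \lambda_\iota\, \mathrm{Id}$ for some $\lambda_\iota \in \mathbb{C}$. Evaluating $\langle Tv,v\rangle = \lambda_\iota \|v\|^2$ on the unit vectors $v = e_m$ (for $m \in \mathcal{I}_\iota$) and $v = u_\iota$ then yields $\gamma_T(m) = \lambda_\iota = \hat{\gamma}_T(\iota)$, which also gives the final claim $\gamma_T(m) = \gamma_T(m')$ for $m,m' \in \mathcal{I}_\iota$. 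I do not anticipate any serious obstacle; the only step requiring care is bookkeeping the indexing $\mathcal{I}_\iota \subset \mathbb{Z}_+^n$ so that the scalar $\lambda_\iota$ supplied by Schur really matches both $\gamma_T$ evaluated at any $m \in \mathcal{I}_\iota$ and $\hat{\gamma}_T$ evaluated at the representative $\iota$.
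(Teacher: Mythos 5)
Your proposal is correct and follows essentially the same route as the paper: it applies the abstract intertwining-operator framework (the paper cites Propositions 2.1 and 2.2 of Quiroga-Barranco, summarized in its Remarks 1 and 2) to the isotypic decomposition of Proposition \ref{prop2}, and then uses Schur's lemma to see that $T$ acts as a scalar $\lambda_\iota$ on each $\mathcal{P}_\iota(\mathbb{C}^n)$, which gives the constancy of $\gamma_T$ on $\mathcal{I}_\iota$ and the identity $\hat{\gamma}_T(\iota)=\gamma_T(m)$. The only difference is that you spell out explicitly the Schur argument and the compatibility of the basis $(e_m)_{m\in\mathcal{I}_\iota}$ with the decomposition, steps the paper delegates to the cited reference.
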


	The assertion involving $R$ follows directly from Proposition 2.2 in \cite{Quiroga2}.
	
	By the last claim  of that proposition, it also follows that for every $\iota \in \mathcal{I}$, the function $\gamma_T(m)$ is
	constant on the set of values $m\in \mathcal{I}_{\iota}$ for which $e_m\in \mathcal{P}_{\iota}(\mathbb{C}^n)$.

	On the other hand, as in the proof of Proposition 2.1 in \cite{Quiroga2} and by Proposition \ref{prop2},
	using Schur's Lemma we obtain  that $T_a$ acts by a scalar multiple on $\mathcal{P}_{\iota}(\mathbb{C}^n) $ for every $\iota \in \mathcal{I}$.

As a consequence of  Proposition \ref{prop2} and Theorem \ref{Teor3} we obtain the following result.
\begin{cor}
	With the above notation, the assignment
	$$T\longmapsto \hat{\gamma}_T $$
	defines an isomorphism of algebras
	$$\mbox{End}_{ S_n\rtimes \mathbb{T}^n } (\mathcal{H}^2_{\alpha}(\mathbb{B}^n)) \longrightarrow \ell^{\infty}(\mathcal{I})$$
\end{cor}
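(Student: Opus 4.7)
The plan is to verify the four standard properties of $T \mapsto \hat{\gamma}_T$: well-definedness into $\ell^\infty(\mathcal{I})$, linearity and multiplicativity, injectivity, and surjectivity. Theorem \ref{Teor3} already does the heavy lifting by exhibiting $R T R^*$ as a multiplication operator on $\ell^2(\mathbb{Z}^n_+)$ whose symbol $\gamma_T$ is constant on each $\mathcal{I}_\iota$; the corollary is essentially the observation that the resulting $\hat{\gamma}_T$ packages this data faithfully and surjects onto $\ell^\infty(\mathcal{I})$.

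For well-definedness, I would note that since $R$ is unitary, $\|T\|=\|RTR^*\|=\|\gamma_T\|_\infty$, and passing to the quotient by the partition $\mathbb{Z}^n_+=\bigsqcup_{\iota\in\mathcal{I}}\mathcal{I}_\iota$ gives $\|\hat{\gamma}_T\|_\infty = \|\gamma_T\|_\infty = \|T\|$, so $\hat{\gamma}_T\in\ell^\infty(\mathcal{I})$. Linearity is immediate from linearity of $T\mapsto RTR^*$ and of pairings against basis vectors. For multiplicativity, the composition $RSR^*\cdot RTR^*=R(ST)R^*$ is again a multiplication operator, whose symbol equals $\gamma_S\gamma_T$ pointwise; since both factors are constant on each $\mathcal{I}_\iota$, so is the product, giving $\hat{\gamma}_{ST}=\hat{\gamma}_S\hat{\gamma}_T$.

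Injectivity is straightforward: if $\hat{\gamma}_T=0$, then $\gamma_T\equiv 0$ on $\mathbb{Z}^n_+$, hence $RTR^*=0$ and $T=0$. For surjectivity, given $\varphi\in\ell^\infty(\mathcal{I})$, define $\tilde{\varphi}\in\ell^\infty(\mathbb{Z}^n_+)$ by $\tilde{\varphi}(m)=\varphi(\iota)$ for the unique $\iota\in\mathcal{I}$ with $m\in\mathcal{I}_\iota$, and set $T=R^*M_{\tilde{\varphi}}R$. Then $T$ is bounded with $\|T\|=\|\varphi\|_\infty$, and acts as the scalar $\varphi(\iota)$ on each $\mathcal{P}_\iota(\mathbb{C}^n)$. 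Because this action is scalar on every irreducible $S_n\rtimes\mathbb{T}^n$-submodule in the decomposition of Proposition \ref{prop2}, $T$ commutes with each $\pi_\alpha(h)$, so $T\in\mbox{End}_{S_n\rtimes\mathbb{T}^n}(\mathcal{H}^2_\alpha(\mathbb{B}^n))$, and by construction $\hat{\gamma}_T=\varphi$.

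The main obstacle I anticipate is the surjectivity step, specifically checking that the operator manufactured from an arbitrary $\varphi\in\ell^\infty(\mathcal{I})$ actually lies in the intertwining algebra. The cleanest way is to argue at the level of the decomposition $\mathcal{H}^2_\alpha(\mathbb{B}^n)=\bigoplus_{\iota\in\mathcal{I}}\mathcal{P}_\iota(\mathbb{C}^n)$: since $T$ restricts to a scalar operator on each summand and the summands are $S_n\rtimes\mathbb{T}^n$-invariant, the intertwining property is automatic without reference to any explicit formula for $\pi_\alpha$. Everything else reduces to bookkeeping with the index partition $\mathbb{Z}^n_+=\bigsqcup_{\iota}\mathcal{I}_\iota$ already built into Theorem \ref{Teor3}.
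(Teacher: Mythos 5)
Your proposal is correct and follows essentially the same route as the paper, which obtains the corollary directly from Proposition \ref{prop2} and Theorem \ref{Teor3} via the general framework of Quiroga-Barranco (the diagonalization by $R$, the constancy of $\gamma_T$ on each $\mathcal{I}_\iota$, and the partition $\mathbb{Z}^n_+=\bigsqcup_{\iota\in\mathcal{I}}\mathcal{I}_\iota$). You merely spell out the bookkeeping (norm equality, multiplicativity, injectivity, and the explicit construction $T=R^*M_{\tilde\varphi}R$ for surjectivity) that the paper leaves to the cited references, and each of these steps is sound.
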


\section{ $A_n \rtimes \mathbb{T}^n$-intertwining operators}{\label{sec4}}

In this section, we show that the algebra $\mbox{End}_{ A_n\rtimes \mathbb{T}^n } (\mathcal{H}^2_{\alpha}(\mathbb{B}^n))$  of bounded operators on $\mathcal{H}^2_{\alpha}(\mathbb{B}^n)$ that intertwine the representation of $A_n \rtimes \mathbb{T}^n$ is commutative by constructing it  as an algebra  of multiplication operators. Our main tool is to express the space $\mathcal{H}^2_{\alpha}(\mathbb{B}^n)$ as a direct sum of appropriate Hilbert spaces. The latter is known as the isotypic decomposition, see \cite{Quiroga7}

For each $\iota=(\iota_1,\ldots \iota_n)\in \mathcal{I}$, we also define the subsets of $\mathcal{I}_\iota$ as follows
$$\mathcal{I}_\iota^+=\{ \sigma(\iota)=(\iota_{\sigma(1)}, \ldots, \iota_{\sigma(n)})\in \mathbb{Z}_+^n: \sigma \in A_n  \}$$
and
$$\mathcal{I}_\iota^-=\{ \sigma(\iota)=(\iota_{\sigma(1)}, \ldots, \iota_{\sigma(n)})\in \mathbb{Z}_+^n: \sigma \in S_n\setminus A_n \}$$
We can check the following relation of the above sets
\begin{eqnarray} 
	\mathcal{I}_\iota^+ \cap \mathcal{I}_\iota^-=\emptyset   & \mbox{ for } \iota_1>\cdots >\iota_n  \\ \label{Iiota eq}
	\mathcal{I}_\iota^+ = \mathcal{I}_\iota^- & \mbox{ otherwise}
\end{eqnarray}

Now, we introduce the subsets of $\mathcal{I}$ given by
\begin{eqnarray}\label{I0}
	\mathcal{I}^0 &=&\{ \iota=(\iota_1,\ldots,\iota_n)\in \mathcal{I}: \exists\,  l_1,l_2 \in \mathbb{Z}^+ \mbox{ such that }\iota_{l_1}=\iota_{l_2} \}\\ \label{Ic}
	\mathcal{I}^c & =&\{ \iota=(\iota_1,\ldots,\iota_n)\in \mathcal{I}: \iota_1>\cdots > \iota_n \}
\end{eqnarray}

Let us denote by
$\mathcal{P}^+_\iota(\mathbb{C}^n)$ and $\mathcal{P}^+_\iota(\mathbb{C}^n)$  the spaces of  polynomials
generated by all the monomials of the form
$$ z^{\sigma(\iota)}= z_1^{\iota_{\sigma(1)}}\cdots z_n^{\iota_{\sigma(n)}}  $$
where $\sigma$ is even and odd respectively.

Using the above relations we can easily obtain the following

\begin{eqnarray} \label{Poly I0}
	\mathcal{P}_\iota(\mathbb{C}^n)=\mathcal{P}^+_\iota(\mathbb{C}^n)= \mathcal{P}^-_\iota(\mathbb{C}^n)   & \mbox{ for }\iota \in \mathcal{I}^0  \\ \label{Poly Ic}
	\mathcal{P}^+_\iota(\mathbb{C}^n)\perp \mathcal{P}^-_\iota(\mathbb{C}^n)     & \mbox{ for }\iota \in \mathcal{I}^c
\end{eqnarray}

For the sake of clarity, we present in the following table, the decomposition associated the first indices
in dimension $n=3$ for the sets above described.

{\tiny \begin{table}[htp]
				\caption{}\label{<table2>}%
		\begin{tabular}{|c|c|c|c|c|c|}
			\hline
			Degree &Elements of $\mathcal{I}$ & $\mathcal{I}^+_{\iota}$&  $\mathcal{I}^{-}_{\iota}$&$\mathcal{P}^+_\iota(\mathbb{C}^3)$&  $\mathcal{P}^{-}_\iota(\mathbb{C}^3)$\\
			\hline
			\hline
			$k_1+k_2+k_3$ & $(k_1,k_2,k_3)$&   $(k_1,k_2,k_3)$&  $(k_1,k_3,k_2)$ &$z_1^{k_1} z_2^{k_2} z_3^{k_3}$&$ z_1^{k_1} z_2^{k_3} z_3^{k_2}$\\
			& & $(k_2,k_3,k_1)$   & $(k_2,k_1,k_3)$   &$ z_1^{k_2}z_2^{k_3}z_3^{k_1} $& $ z_1^{k_2}z_2^{k_1}z_3^{k_3} $\\
			&  &$(k_3,k_1,k_2)$  &   $(k_3,k_2,k_1)$  &$z_1^{k_3} z_2^{k_1} z_3^{k_2}$&$ z_1^{k_3} z_2^{k_2} z_3^{k_1}$\\
			\hline
			$2k_1+k_2$ & $(k_1,k_1,k_2)$ & $(k_1,k_1,k_2)$ & $(k_1,k_1,k_2)$  &$z_1^{k_1} z_2^{k_1} z_3^{k_2}$ &$z_1^{k_1} z_2^{k_1} z_3^{k_2}$\\
			&  & $(k_1,k_2,k_1)$ &  $(k_1,k_2,k_1)$  &$z_1^{k_1} z_2^{k_2} z_3^{k_1}$ & $z_1^{k_1} z_2^{k_2} z_3^{k_1}$\\
			& & $(k_2,k_1,k_1)$ & $(k_2,k_1,k_1)$ &$z_1^{k_2} z_2^{k_1} z_3^{k_1}$& $z_1^{k_2} z_2^{k_1} z_3^{k_1}$\\
			\hline
			$k_1+2k_2$ & $(k_1,k_2,k_2)$ & $(k_1,k_2,k_2)$ & $(k_1,k_2,k_2)$ & $z_1^{k_1} z_2^{k_2} z_3^{k_2}$& $z_1^{k_1} z_2^{k_2} z_3^{k_2}$\\
			& & $(k_2,k_1,k_2)$ & $(k_2,k_1,k_2)$ & $z_1^{k_2} z_2^{k_1} z_3^{k_2}$& $z_1^{k_2} z_2^{k_1} z_3^{k_2}$\\
			& & $ (k_2,k_2,k_1)$ & $ (k_2,k_2,k_1)$  & $z_1^{k_2} z_2^{k_2} z_3^{k_1}$&$z_1^{k_2} z_2^{k_2} z_3^{k_1}$\\
			\hline
			$3k_1$ & $(k_1,k_1,k_1)$ & $(k_1,k_1,k_1)$  & $(k_1,k_1,k_1)$ & $z_1^{k_1} z_2^{k_1} z_3^{k_1}$& $z_1^{k_1} z_2^{k_1} z_3^{k_1}$\\
			\hline
		\end{tabular}
	\footnotetext{It can be noted that the decomposition for $A_n$ in this table is finer than that carried out in table 1.}
		\label{default}
\end{table}}

\begin{proposition}{\label{prop5}}
	The decomposition of  $\mathcal{P}(\mathbb{C}^n)$ into irreducible $A_n\rtimes \mathbb{T}^n$-modules is given by
	$$\mathcal{P}(\mathbb{C}^n)=\left(\sum_{\iota\in \mathcal{I}^0} \mathcal{P}_\iota(\mathbb{C}^n)\right)+ \left(\sum_{\iota\in \mathcal{I}^c} \mathcal{P}^+_\iota(\mathbb{C}^n) \bigoplus \mathcal{P}^-_\iota(\mathbb{C}^n)\right) $$
	More precisely, we have the following statements
	\begin{enumerate}
		\item For every $\iota\in \mathcal{I}^0$, the space $\mathcal{P}_\iota(\mathbb{C}^n)$ is an irreducible $A_n\rtimes \mathbb{T}^n$
		-submodule.
		\item For every $\iota\in \mathcal{I}^c$, the spaces $\mathcal{P}^+_\iota(\mathbb{C}^n)$ and $\mathcal{P}^-_\iota(\mathbb{C}^n)$  are irreducible $A_n\rtimes \mathbb{T}^n$-submodules.
	\end{enumerate}
	
	Moreover, we obtain the following list of relations
	\begin{enumerate}
		\item For $\iota_1,\iota_2\in \mathcal{I}$, we have $\mathcal{P}^{\pm}_{\iota_1}(\mathbb{C}^n) \ncong \mathcal{P}^{\pm}_{\iota_2}(\mathbb{C}^n) $
		as $A_n\rtimes \mathbb{T}^n$-modules and $\mathcal{P}^{\pm}_{\iota_1}(\mathbb{C}^n) \perp \mathcal{P}^{\pm}_{\iota_2}(\mathbb{C}^n) $
		whenever $\iota_1\neq \iota_2$
		\item For $\iota\in \mathcal{I}^c$, we have $\mathcal{P}^{+}_{\iota}(\mathbb{C}^n) \ncong \mathcal{P}^{-}_{\iota}(\mathbb{C}^n) $
		as $A_n\rtimes \mathbb{T}^n$-modules \\and $\mathcal{P}^{+}_{\iota}(\mathbb{C}^n) \perp \mathcal{P}^{-}_{\iota}(\mathbb{C}^n) $
	\end{enumerate}

	In particular, for every $\alpha > -1$ we have
	$$\mathcal{H}^2_{\alpha}(\mathbb{B}^n)=\left(\bigoplus_{\iota\in \mathcal{I}^0} \mathcal{P}_\iota(\mathbb{C}^n)\right)\bigoplus \left(\bigoplus_{\iota\in \mathcal{I}^c} \mathcal{P}^+_\iota(\mathbb{C}^n)\bigoplus \mathcal{P}^-_\iota(\mathbb{C}^n)\right)$$
	as an orthogonal direct sum of Hilbert spaces that yields the decomposition of $\mathcal{H}^2_{\alpha}(\mathbb{B}^n)$
	into irreducible $A_n\rtimes \mathbb{T}^n$-modules.
\end{proposition}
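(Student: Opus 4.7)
The plan is to imitate the proof of Proposition~\ref{prop2}, the only new ingredient being that the $A_n$-action on the $S_n$-orbit $\mathcal{I}_\iota$ may split into two orbits depending on whether $\iota\in\mathcal{I}^0$ or $\iota\in\mathcal{I}^c$. I will use the same action formula from~(\ref{Action on monomials}), now restricted to $\sigma\in A_n$, namely $(\sigma,t)\cdot z^m = t^{-\sigma(m)}z^{\sigma(m)}$. This identity immediately shows that $\mathcal{P}_\iota(\mathbb{C}^n)$ for $\iota\in\mathcal{I}^0$ and $\mathcal{P}^\pm_\iota(\mathbb{C}^n)$ for $\iota\in\mathcal{I}^c$ are $A_n\rtimes \mathbb{T}^n$-invariant, since each is by construction the linear span of a single $A_n$-orbit of monomials. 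For the $\mathcal{I}^0$ case I would record the key observation: if $\iota_j=\iota_k$ for some $j\ne k$, then the transposition $(j\,k)$ fixes $\iota$, so any odd $\sigma$ can be replaced by the even $\sigma\circ(j\,k)$ without altering $\sigma(\iota)$; hence $\mathcal{I}^+_\iota=\mathcal{I}^-_\iota=\mathcal{I}_\iota$, giving transitivity of $A_n$ on $\mathcal{I}_\iota$ and justifying $\mathcal{P}_\iota=\mathcal{P}^+_\iota=\mathcal{P}^-_\iota$ in line with~(\ref{Poly I0}).

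Next, I would establish irreducibility. Since $\mathbb{T}^n$ acts on $z^m$ by the character $t\mapsto t^{-m}$ and distinct monomials carry distinct characters, any closed $\mathbb{T}^n$-invariant subspace of $\mathcal{P}(\mathbb{C}^n)$ is spanned by a subset of the monomials. A nonzero $A_n\rtimes\mathbb{T}^n$-invariant subspace $W\subseteq \mathcal{P}^\pm_\iota(\mathbb{C}^n)$ therefore contains some $z^m$ with $m\in\mathcal{I}^\pm_\iota$, and applying the $A_n$-action fills out the entire orbit, which spans $\mathcal{P}^\pm_\iota(\mathbb{C}^n)$ by the definition of $\mathcal{I}^\pm_\iota$. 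The identical argument yields irreducibility of $\mathcal{P}_\iota(\mathbb{C}^n)$ when $\iota\in\mathcal{I}^0$, using the transitivity observation above.

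For the orthogonality and non-isomorphism claims I would rely on two facts: distinct monomials are orthogonal in $\mathcal{H}^2_\alpha(\mathbb{B}^n)$, and the set of $\mathbb{T}^n$-weights appearing in an $A_n\rtimes\mathbb{T}^n$-module is a module invariant (as in the end of the proof of Proposition~\ref{prop2}, via \cite{Brocker}). The weights occurring in $\mathcal{P}^\pm_\iota(\mathbb{C}^n)$ are exactly $-\mathcal{I}^\pm_\iota$. Since $\mathcal{I}_{\iota_1}\cap \mathcal{I}_{\iota_2}=\emptyset$ for $\iota_1\ne\iota_2$ in $\mathcal{I}$ (because each orbit has a unique weakly decreasing representative), and $\mathcal{I}^+_\iota\cap\mathcal{I}^-_\iota=\emptyset$ for $\iota\in\mathcal{I}^c$ by~(\ref{Iiota eq}), the weight sets of the various modules are pairwise disjoint in every case claimed, which gives both the orthogonality and the non-isomorphism statements at once.

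Finally, the Hilbert space decomposition of $\mathcal{H}^2_\alpha(\mathbb{B}^n)$ follows from the density of $\mathcal{P}(\mathbb{C}^n)$ in $\mathcal{H}^2_\alpha(\mathbb{B}^n)$ combined with the pairwise orthogonality just established, exactly as in Proposition~\ref{prop2}. The only genuinely new point compared with the $S_n\rtimes\mathbb{T}^n$ case is the dichotomy between $\mathcal{I}^0$ and $\mathcal{I}^c$: one must check that the decomposition for $\iota\in\mathcal{I}^0$ does \emph{not} split further into $\pm$ pieces (which is precisely the transitivity argument above), while for $\iota\in\mathcal{I}^c$ it does split into two distinct irreducibles. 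This bookkeeping is the step I expect to require the most care.
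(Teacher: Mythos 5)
Your proposal is correct and follows essentially the same route as the paper: the action formula $(\sigma,t)\cdot z^m=t^{-\sigma(m)}z^{\sigma(m)}$ restricted to $A_n$, orthogonality of distinct monomials, non-isomorphism via the $\mathbb{T}^n$-characters (weights), and density of $\mathcal{P}(\mathbb{C}^n)$ for the Hilbert space decomposition. The only difference is that you spell out details the paper leaves implicit, namely the transposition argument showing $\mathcal{I}^+_\iota=\mathcal{I}^-_\iota$ for $\iota\in\mathcal{I}^0$ and the weight-space argument for irreducibility, which is a welcome addition rather than a departure.
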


\begin{proof}
	The first sum in the statement follows trivially from what was shown in Proposition \ref{prop2} and from the relations \ref{Poly I0} and \ref{Poly Ic}.

	For each monomial $z^m$ with $m\in \mathbb{Z}_+^n$  it was proved in Proposition \ref{prop2} that
	\begin{eqnarray*}
		(\sigma,t)\cdot z^m = t^{-\sigma(m)}z^{\sigma(m)},
	\end{eqnarray*}
	where  $\sigma(m)=(m_{\sigma(1)}\ldots m_{\sigma(n)})\in \mathbb{Z}_+^n$.This remains true regardless of the parity of the permutation $\sigma$, so for $\iota\in \mathcal{I}^c$, we have $\mathcal{P}^{+}_{\iota}(\mathbb{C}^n)$ and  $\mathcal{P}^{-}_{\iota}(\mathbb{C}^n) $
	are irreducible $A_n\rtimes \mathbb{T}^n$-modules. The same conclusion is obtained when $\iota\in \mathcal{I}^0$.
	
	We consider for $\iota_1 \neq \iota_2 \in \mathcal{I}^c$, $m\in \mathcal{I}_{\iota_1}$ and  $m^{\prime}\in \mathcal{I}_{\iota_2}$. Using well known results about orthogonality relations  (see \cite{Zhu1}) and  the relationship
	\begin{eqnarray*}
		\langle (\sigma,t)\cdot z^m, z^{m'}\rangle=t^{-\sigma(m)}\langle z^{\sigma(m)}, z^{m'}\rangle,
	\end{eqnarray*}
	the orthogonality of $\mathcal{P}^{+}_{\iota_1}(\mathbb{C}^n)$ and    $\mathcal{P}^{+}_{\iota_2}(\mathbb{C}^n) $ is immediate. The other cases are treated in the same way.
	
	Since the isomorphism class  of an irreducible $A_n \rtimes \mathbb{T}^n$-module is determined by its character, it follows that  for $\iota_1,\iota_2\in \mathcal{I}$, we have $\mathcal{P}^{\pm}_{\iota_1}(\mathbb{C}^n) \ncong \mathcal{P}^{\pm}_{\iota_2}(\mathbb{C}^n) $
	as $A_n\rtimes \mathbb{T}^n$-modules.
\end{proof}

Proposition \ref{prop5} allows us to apply the results from Section 2 of \cite{Quiroga2}. We  consider, for every $\alpha > -1$, the same Hilbert basis $(e_m)_{m \in \mathbb{Z}_+^n}$ defined in the previous section. Hence,  Proposition 2.2 in \cite{Quiroga3} and Proposition \ref{prop5} yield the following result.
\begin{theorem}{\label{6}}
	For every $\alpha > -1$, the algebra $\mbox{End}_{A_n\rtimes \mathbb{T}^n} (\mathcal{H}^2_{\alpha}(\mathbb{B}^n))$ is commutative.
	More precisely, with the above notation and for the unitary map
	\begin{eqnarray*}
		R :& \mathcal{H}^2_{\alpha}(\mathbb{B}^n)\longrightarrow \ell^2(\mathbb{Z}^n_+)\\
		&R(f) = (\langle f , e_{m}\rangle)_{m\in\mathbb{Z}^n_+}   ,
	\end{eqnarray*}
	every operator $T\in \mbox{End}_{A_n\rtimes \mathbb{T}^n} (\mathcal{H}^2_{\alpha}(\mathbb{B}^n))$ is unitarily equivalent to $RT R^*$ which is the
	multiplication operator on $\ell^2(\mathbb{Z}^n_+)$
	by the function
	\begin{eqnarray*}
		&\gamma_T : \mathbb{Z}^n_+ \longrightarrow \mathbb{C}\\   &\gamma_T(m)=\langle T e_{m}, e_{m} \rangle
	\end{eqnarray*}
	Furthermore, let us choose the following unitary vectors
	\begin{enumerate}
		\item For every $\iota\in \mathcal{I}^0$ a unitary vector $u_{\iota}=e_{\iota}(z) \in  P_{\iota}(\mathbb{C}^n)$.
		\item For every $\iota\in \mathcal{I}^c$ a unitary vector $u^+_{\iota}=e_{\iota}(z) \in  P_{\iota}^+(\mathbb{C}^n)$.
		\item For every $\iota\in \mathcal{I}^c$ a unitary vector $u^-_{\iota}=e_{\sigma(\iota)}(z) \in  P_{\iota}^-(\mathbb{C}^n)$ where $\sigma$ is odd.
	\end{enumerate}
	We consider the function
	\begin{eqnarray*}
		&\hat{\gamma}_T : \mathcal{I}^0 \sqcup \mathcal{I}^c_+\sqcup \mathcal{I}^c_- \longrightarrow \mathbb{C}\\
		&\hat{\gamma}_T(\iota)=\left\{
		\begin{array}{ll}
			\langle T u_{\iota}, u_{\iota} \rangle   & \mbox{ for } \iota \in \mathcal{I}^0 \\
			\langle T u^+_{\iota}, u^+_{\iota} \rangle   & \mbox{ for } \iota \in \mathcal{I}^c_+=\mathcal{I}^c \\
			\langle T u^-_{\iota}, u^-_{\iota} \rangle   & \mbox{ for } \iota \in \mathcal{I}^c_-=\mathcal{I}^c
		\end{array}    \right.
	\end{eqnarray*}
	Then,
	\begin{enumerate}
		\item For $\iota \in \mathcal{I}^0$ we have
		$\hat{\gamma}_T(\iota)=\gamma_T(m)$
		for every $m\in \mathcal{I}_{\iota}$, i.e.,
		$ \gamma_T(m)=\gamma_T(m')$ whenever  $m, m'\in \mathcal{I}_{\iota}$.
		\item For $\iota \in \mathcal{I}^c$ we have
		$\hat{\gamma}_T(\iota)=\gamma_T(m)$
		for every $m\in \mathcal{I}_{\iota}^+$, i.e.,
		$ \gamma_T(m)=\gamma_T(m')$ whenever  $m, m'\in \mathcal{I}_{\iota}^+$.
		\item Consider $\sigma$ an odd permutation. For $\iota \in \mathcal{I}^c$  we have
		$\hat{\gamma}_T(\sigma(\iota))=\gamma_T(m)$
		for every $m\in \mathcal{I}_{\iota}^-$, i.e.,
		$ \gamma_T(m)=\gamma_T(m')$ whenever  $m, m'\in \mathcal{I}_{\iota}^-$.
	\end{enumerate}
\end{theorem}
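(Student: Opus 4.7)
The plan is to repeat verbatim the argument sketched for Theorem \ref{Teor3}, now with Proposition \ref{prop5} taking the place of Proposition \ref{prop2}. First I would apply Remark 1 to $V = \mathcal{P}(\mathbb{C}^n)$ and $H = A_n\rtimes \mathbb{T}^n$: Proposition \ref{prop5} furnishes pairwise non-isomorphic, mutually orthogonal, irreducible $H$-submodules whose algebraic sum is dense in $\mathcal{H}^2_\alpha(\mathbb{B}^n)$, so Remark 1 delivers at once the commutativity of $\mbox{End}_{A_n\rtimes \mathbb{T}^n}(\mathcal{H}^2_\alpha(\mathbb{B}^n))$ and the orthogonal direct-sum decomposition appearing in Proposition \ref{prop5}.

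Next I would apply Remark 2 with the orthonormal basis $(e_m)_{m\in \mathbb{Z}^n_+}$, which is compatible with the decomposition because each $e_m$ lies in a single irreducible summand. The partition of $L = \mathbb{Z}^n_+$ required by Remark 2 is
$$\mathbb{Z}^n_+ \;=\; \bigsqcup_{\iota\in\mathcal{I}^0}\mathcal{I}_\iota \;\sqcup\; \bigsqcup_{\iota\in\mathcal{I}^c}\mathcal{I}^+_\iota \;\sqcup\; \bigsqcup_{\iota\in\mathcal{I}^c}\mathcal{I}^-_\iota,$$
and each piece indexes the standard basis of the corresponding irreducible summand from Proposition \ref{prop5}. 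Remark 2 then yields directly that $R$ is unitary, that every $T\in \mbox{End}_{A_n\rtimes \mathbb{T}^n}(\mathcal{H}^2_\alpha(\mathbb{B}^n))$ is unitarily equivalent to multiplication by $\gamma_T(m)=\langle Te_m,e_m\rangle$, and that $\gamma_T$ is constant on each of the three kinds of pieces above, which is precisely items (1)--(3) of the conclusion.

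To see that $\hat{\gamma}_T$ as defined via $u_\iota$, $u^+_\iota$, $u^-_\iota$ is well-posed and produces the same three values, I would invoke Schur's lemma summand by summand, exactly as in the Schur argument following Theorem \ref{Teor3}: since $T$ intertwines an irreducible representation of $A_n\rtimes \mathbb{T}^n$ with itself, it acts as a scalar on each of $\mathcal{P}_\iota(\mathbb{C}^n)$, $\mathcal{P}^+_\iota(\mathbb{C}^n)$, $\mathcal{P}^-_\iota(\mathbb{C}^n)$. Consequently $\langle Tu,u\rangle$ coincides with the common value of $\gamma_T$ on the index set for any unit vector $u$ in that summand; in particular, for $\iota\in \mathcal{I}^c$, the value $\hat{\gamma}_T(\sigma(\iota)) = \langle T e_{\sigma(\iota)}, e_{\sigma(\iota)}\rangle$ does not depend on the choice of the odd permutation $\sigma$.

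The main obstacle is really just bookkeeping rather than mathematics: unlike the $S_n\rtimes \mathbb{T}^n$ case, for $\iota\in \mathcal{I}^c$ the space $\mathcal{P}_\iota(\mathbb{C}^n)$ splits into two inequivalent irreducible halves, so one must carry two copies $\mathcal{I}^c_+$ and $\mathcal{I}^c_-$ of the set $\mathcal{I}^c$ and keep the $\pm$ sectors separated throughout the argument. Since the serious representation-theoretic content --- irreducibility, pairwise non-isomorphism, and orthogonality --- is already packaged in Proposition \ref{prop5}, no further ideas beyond those used in Theorem \ref{Teor3} are needed.
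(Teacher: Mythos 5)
Your proposal is correct and takes essentially the same route as the paper: the paper's proof also combines Proposition \ref{prop5} with the general machinery of Proposition 2.2 in \cite{Quiroga2} (which Remarks 1 and 2 summarize) to get the statements about $R$ and $\gamma_T$, and then applies Schur's lemma summand by summand to identify $\hat{\gamma}_T$ with the constant values of $\gamma_T$ on $\mathcal{I}_\iota$, $\mathcal{I}^+_\iota$, $\mathcal{I}^-_\iota$. The only difference is expository: you spell out the partition of $\mathbb{Z}^n_+$ and the independence of the choice of odd $\sigma$, details the paper leaves implicit.
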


\begin{proof}
	The assertion involving $R$ follows directly from Proposition 2.2 in \cite{Quiroga2}.
	By the last claim of that proposition,  it follows that for every $\iota \in \mathcal{I}^0 \sqcup \mathcal{I}^C$, the function $\gamma_T$ is constant on the set of values $m\in \mathcal{I}_{\iota}$ for which $e_m \in  \mathcal{P}_{\iota}(\mathbb{C}^n) \sqcup \mathcal{P}_{\iota}^+(\mathbb{C}^n) \sqcup \mathcal{P}_{\iota}^-(\mathbb{C}^n)$.
	
	On the other hand, Schur's Lemma implies that $T$ acts by scalar multiplication on each $\mathcal{P}_{\iota}(\mathbb{C}^n)$, $\mathcal{P}_{\iota}^+(\mathbb{C}^n)$, $\mathcal{P}_{\iota}^-(\mathbb{C}^n)$ respectively. We choose for $\iota \in \mathcal{I}^0$  a unitary vector  $u_{\iota}\in P_{\iota}(\mathbb{C}^n) $. It is easy to  obtain that
	$\langle T u_{\iota}, u_{\iota} \rangle = \langle Te_{m}, e_{m} \rangle$,
	for each $m \in \mathcal{I}_{\iota}$. In a completely analogous way if $\iota \in \mathcal{I}^{c}$ then we choose unit vectors $u_{\iota}^+ \in \mathcal{P}_{\iota}^+(\mathbb{C}^n)$ and $u_{\iota}^- \in \mathcal{P}_{\iota}^-(\mathbb{C}^n)$ such that
	$$\langle T u_{\iota}^+,u_{\iota}^+ \rangle = \langle T e_m, e_m \rangle, \bigskip \langle T u_{\iota}^-,u_{\iota}^- \rangle = \langle T e_k, e_k \rangle$$
	for each $m \in \mathcal{I}_{\iota}^+$ and $k \in \mathcal{I}_{\iota}^-$, respectively.

\end{proof}

As a consequence of Proposition 2.2 in \cite{Quiroga2} and Theorem \ref{6} we obtain the following result.

\begin{cor}
	With the above notation, the assignment
	$$T\longmapsto \hat{\gamma}_T $$
	defines an isomorphism of algebras
	$$\mbox{End}_{ A_n\rtimes \mathbb{T}^n } (\mathcal{H}^2_{\alpha}(\mathbb{B}^n)) \longrightarrow \ell^{\infty}(\mathcal{I}^0 \sqcup \mathcal{I}^c_+\sqcup \mathcal{I}^c_-)$$
\end{cor}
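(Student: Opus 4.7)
The plan is to verify that $T \mapsto \hat{\gamma}_T$ is a linear, multiplicative, isometric bijection onto $\ell^{\infty}(\mathcal{I}^0 \sqcup \mathcal{I}^c_+ \sqcup \mathcal{I}^c_-)$. Theorem \ref{6} already supplies the key ingredients: the assignment is well-defined, and unitary conjugation by $R$ sends $T$ to the multiplication operator $M_{\gamma_T}$ on $\ell^2(\mathbb{Z}^n_+)$, where $\gamma_T$ is constant on each orbit $\mathcal{I}_\iota$ (for $\iota \in \mathcal{I}^0$) and on each $\mathcal{I}_\iota^{\pm}$ (for $\iota \in \mathcal{I}^c$). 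Hence $\gamma_T$ factors through a bounded function on the disjoint union, which is precisely $\hat{\gamma}_T$.

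First I would observe that $T \mapsto RTR^*$ is a $*$-algebra isomorphism onto its image, and that image consists of multiplication operators on $\ell^2(\mathbb{Z}^n_+)$, which compose by pointwise product. This yields linearity and multiplicativity of $T \mapsto \gamma_T$, and therefore of $T \mapsto \hat{\gamma}_T$. Since the operator norm of a multiplication operator on $\ell^2$ equals the $\ell^\infty$-norm of its symbol, the assignment is isometric, which in particular gives injectivity and confirms $\hat{\gamma}_T \in \ell^\infty$ with $\|\hat{\gamma}_T\|_\infty = \|T\|$.

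For surjectivity, given $\phi \in \ell^\infty(\mathcal{I}^0 \sqcup \mathcal{I}^c_+ \sqcup \mathcal{I}^c_-)$, I would define $\tilde{\phi}:\mathbb{Z}^n_+ \to \mathbb{C}$ by spreading $\phi$ over the orbit structure, declaring $\tilde\phi$ to be constant on each $\mathcal{I}_\iota$ or $\mathcal{I}_\iota^{\pm}$ and equal to the corresponding value of $\phi$ there. This is bounded with $\|\tilde\phi\|_\infty = \|\phi\|_\infty$, so $M_{\tilde\phi}$ is a bounded operator on $\ell^2(\mathbb{Z}^n_+)$, and I set $T = R^* M_{\tilde\phi} R$. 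The step that requires the most care, and which I view as the main obstacle, is verifying that $T$ lies in $\mbox{End}_{A_n\rtimes \mathbb{T}^n}(\mathcal{H}^2_\alpha(\mathbb{B}^n))$: by construction $T$ acts as a single scalar on each of the summands $\mathcal{P}_\iota(\mathbb{C}^n)$ ($\iota \in \mathcal{I}^0$) and $\mathcal{P}^\pm_\iota(\mathbb{C}^n)$ ($\iota \in \mathcal{I}^c$), and each such summand is $A_n\rtimes \mathbb{T}^n$-invariant by Proposition \ref{prop5}, so $T$ commutes with $\pi_\alpha(h)$ on every summand and therefore on the whole Hilbert space. Finally $\hat{\gamma}_T = \phi$ holds by construction, completing surjectivity and hence the isomorphism claim.
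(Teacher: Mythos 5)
Your proof is correct and follows essentially the same route as the paper, which derives the corollary directly from Theorem \ref{6} together with Proposition 2.2 of \cite{Quiroga2}: your verification of linearity, multiplicativity, isometry and surjectivity is precisely the standard argument behind that citation, relying on the multiplicity-free decomposition of Proposition \ref{prop5} and the constancy of $\gamma_T$ on the sets $\mathcal{I}_\iota$ and $\mathcal{I}^{\pm}_\iota$. In particular, your construction $T=R^*M_{\tilde{\phi}}R$ and the check that it intertwines the $A_n\rtimes\mathbb{T}^n$-action supplies the surjectivity step that the paper leaves to the cited reference.
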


\section{Toeplitz operators with symmetric separately radial symbols}{\label{sec5}}

In this section we consider new types of symbols resulting from the action of certain semidirect products of symmetric groups with $\mathbb{T}^n$ on $\mathcal{H}_{\alpha}^2(\mathbb{B}^n)$.

Following \cite{Quiroga2}, we define a  symmetric separately radial
function $a \in  L^{\infty}(\mathbb{B}^n)$ if it can be written as follows
\begin{eqnarray}\label{Symm:function}
	a(z) = a(\vert z_1\vert, \ldots ,\vert z_n\vert)
\end{eqnarray}
and satisfies that
\begin{eqnarray}\label{Symm:function}
	a(\sigma(z))= a(z) \mbox{ where } \sigma \in S_n
\end{eqnarray}
for almost every $z \in \mathbb{B}^n$.

In other words, the function $a$ is symmetric separately radial if and only if it is $S_n\rtimes \mathbb{T}^n$-invariant.

The set of functions $S_n\rtimes \mathbb{T}^n$-invariant will be denoted by $\mathcal{A}_{S_n\rtimes \mathbb{T}^n}$.

The corresponding Toeplitz operators with this kind of symbols are thus called
symmetric separately radial Toeplitz operators.

From Proposition \ref{prop1} we have that a symmetric separately radial Toeplitz operator is
$S_n\rtimes \mathbb{T}^n$-invariant in every Bergman space $\mathcal{H}^2_{\alpha}(\mathbb{B}^n)$.

Now, we consider $a$ a
symmetric separately radial function, hence, for each $m=\sigma(\iota)\in \mathcal{I}_\iota$ with $\sigma \in S_n$, using the fact that $a$ is separately radial it follows that
\begin{eqnarray*}
	\gamma_a(m)&=& \langle T_a(e_n), e_m \rangle_{\alpha} \\ &=& \langle a e_m, e_{m}\rangle_{\alpha}\\&=&  \frac{\Gamma(n+\vert m \vert +\alpha +1 ) }{m!\Gamma(n +\alpha +1 )} c_{\alpha}\int_{\mathbb{B}^n} a(z  ) z^m \bar{z}^{m}(1-\vert z \vert^2)^{\alpha} dv(z)\\
	&=&  \frac{n!\Gamma(n+\vert m \vert +\alpha +1 ) }{\pi^n m!\Gamma(n +\alpha +1 )} c_{\alpha}\int_{\mathbb{B}^n} a( z  ) \vert z \vert^{2m}(1-\vert z \vert^2)^{\alpha} dz
\end{eqnarray*}
and using change of variable $z=\sigma(w)$ and facts $\iota!=\sigma(\iota)!$ and $\vert \iota \vert=\vert \sigma(\iota) \vert$,  we obtain
\begin{eqnarray*}
	&=&  \frac{n!\Gamma(n+\vert \iota \vert +\alpha +1 ) }{\pi^n \iota!\Gamma(n +\alpha +1 )} c_{\alpha}\int_{\mathbb{B}^n} a( \sigma(w)  ) \vert \sigma(w) \vert^{2\sigma(\iota)}(1-\vert \sigma(w) \vert^2)^{\alpha} dw\\
	&=&  \frac{n!\Gamma(n+\vert \iota \vert +\alpha +1 ) }{\pi^n \iota!\Gamma(n +\alpha +1 )} c_{\alpha}\int_{\mathbb{B}^n} a(\vert w_1 \vert, \ldots, \vert w_n \vert ) \vert w \vert^{2\iota}(1-\vert w \vert^2)^{\alpha} dw
\end{eqnarray*}
and using polar coordinates in each axis  of $\mathbb{C}^n$ we obtain
$$=\frac{2^n\Gamma(n+\vert \iota \vert +\alpha +1 ) }{\iota!\Gamma(n +\alpha +1 )} \int_{\tau(\mathbb{B}^n)} a(r) r^{2\iota}(1-\vert r \vert^2)^{\alpha} \prod_{k=1}^n r_k dr_k.$$

In summary we have the following result.
\begin{theorem}{\label{Teor8}}
	The $C^*$-algebra generated by Toeplitz operators with symmetric separately radial symbols
	is commutative, i. e. , for every $\alpha > -1$  the Toeplitz operator  is  unitarily equivalent to
	$$RTR^*= \bigoplus_{\iota \in \mathcal{I}} \gamma_a(\iota) I_{\mid_{\ell^2(\mathcal{I}_{\iota})}} $$
	acting on
	$$\ell^2(\mathbb{Z}_+^n)=\bigoplus_{\iota\in \mathcal{I}}\ell^2(\mathcal{I}_{\iota}),$$
	where $R$ is given in Theorem \ref{Teor3} and  the spectral function is given by
	$$\gamma_a(\iota)=\frac{2^n\Gamma(n+\vert \iota \vert +\alpha +1 ) }{\iota!\Gamma(n +\alpha +1 )} \int_{\tau(\mathbb{B}^n)} a(r) r^{2\iota}(1-\vert r \vert^2)^{\alpha} \prod_{k=1}^n r_k dr_k $$
	where
	$\iota \in \mathcal{I}$.

	In particular, from the above result we obtain that  every Toeplitz operator with symmetric separately radial symbol
	is  unitarily equivalent to
	$$RTR^*= \gamma_a(m) I$$
	acting on $l^2(\mathbb{Z}_+^n)$ where
	$$\gamma_a(m)=\frac{2^n\Gamma(n+\vert m \vert +\alpha +1 ) }{m!\Gamma(n +\alpha +1 )} \int_{\tau(\mathbb{B}^n)} a(r) r^{2m}(1-\vert r \vert^2)^{\alpha} \prod_{k=1}^n r_k dr_k. $$
	Then the spectral function satisfies that
	$$\gamma_a(m)= \gamma_a(\iota)$$
	for all $m\in \mathcal{I}_{\iota}.$
\end{theorem}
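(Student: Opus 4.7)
The plan is to assemble the conclusion from three ingredients already available: Proposition~\ref{prop1}, Theorem~\ref{Teor3}, and the direct computation preceding the theorem statement. First, I would observe that since $a \in \mathcal{A}_{S_n \rtimes \mathbb{T}^n}$, Proposition~\ref{prop1} gives $T_a \in \mbox{End}_{S_n \rtimes \mathbb{T}^n}(\mathcal{H}^2_\alpha(\mathbb{B}^n))$. Hence Theorem~\ref{Teor3} applies and yields the unitary equivalence $R T_a R^* = M_{\gamma_a}$, where $\gamma_a(m) = \langle T_a e_m, e_m\rangle$, together with the constancy relation $\gamma_a(m) = \gamma_a(m')$ whenever $m, m' \in \mathcal{I}_\iota$. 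This already establishes commutativity and the block-diagonal structure $RTR^* = \bigoplus_{\iota \in \mathcal{I}} \gamma_a(\iota) I_{|\ell^2(\mathcal{I}_\iota)}$.

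Next I would compute $\gamma_a(m)$ explicitly. Because $a$ is separately radial, the function $a(z)\,\overline{z^m}$ retains the same angular dependence as $\overline{z^m}$, so the standard orthogonality of monomials in $\mathcal{H}^2_\alpha(\mathbb{B}^n)$ forces the reproducing integral $T_a e_m(z)$ to reduce diagonally: $\langle T_a e_m, e_m\rangle_\alpha = \langle a e_m, e_m\rangle_\alpha$. Writing this out with the explicit normalization of $e_m$ and converting the Lebesgue integral to polar coordinates in each complex coordinate via $z_k = r_k e^{i\theta_k}$ yields the formula
\begin{equation*}
\gamma_a(m) = \frac{2^n \Gamma(n + |m| + \alpha + 1)}{m!\, \Gamma(n + \alpha + 1)} \int_{\tau(\mathbb{B}^n)} a(r)\, r^{2m} (1 - |r|^2)^\alpha \prod_{k=1}^n r_k\, dr_k.
\end{equation*}

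To confirm the equality $\gamma_a(m) = \gamma_a(\iota)$ for all $m \in \mathcal{I}_\iota$ directly (rather than quoting Theorem~\ref{Teor3}), I would apply the change of variables $w = \sigma^{-1}(r)$ induced by the permutation $\sigma$ with $m = \sigma(\iota)$. The $S_n$-invariance of $a$ eliminates $\sigma$ from the integrand, the factors $|w|^{2\sigma(\iota)}$ become $|w|^{2\iota}$ after relabeling, and both $m! = \iota!$ and $|m| = |\iota|$ hold, so the prefactor is unchanged. This is exactly the computation sketched in the display preceding the theorem.

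The only subtle point—hence the sole potential obstacle—is ensuring that the diagonalization $\langle T_a e_m, e_m\rangle_\alpha = \langle a e_m, e_m\rangle_\alpha$ is invoked cleanly; strictly, this is Schur's lemma applied to the $S_n \rtimes \mathbb{T}^n$-irreducible module $\mathcal{P}_\iota(\mathbb{C}^n)$ containing $e_m$, combined with the fact that $T_a$ preserves each such module (already embedded in Theorem~\ref{Teor3}). Once that is acknowledged, the proof reduces to routine integral manipulation, and the final ``In particular'' statement is just the scalar restatement of the block-diagonal form on each $\mathcal{I}_\iota$.
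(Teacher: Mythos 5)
Your proposal is correct and follows essentially the same route as the paper: invoke Proposition~\ref{prop1} and Theorem~\ref{Teor3} for commutativity, the unitary equivalence via $R$, and the constancy of $\gamma_a$ on each $\mathcal{I}_\iota$, then compute $\gamma_a(m)$ by passing to polar coordinates and using the change of variables $z=\sigma(w)$ together with the $S_n$-invariance of $a$ and the identities $m!=\iota!$, $\vert m\vert=\vert\iota\vert$. The only cosmetic difference is that the identity $\langle T_a e_m,e_m\rangle_\alpha=\langle a e_m,e_m\rangle_\alpha$ needs no Schur-type or angular-dependence argument at all, since it is immediate from $T_a=PM_a$ with $P$ the self-adjoint Bergman projection fixing $e_m$.
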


\begin{remark}
	The spectral function associated to symmetric separately radial Toeplitz operators is invariant under permutations.
\end{remark}

\begin{remark}
	The symmetric separately radial Toeplitz operators are more general than radial Toeplitz operators, i.e., every radial Toeplitz operators is a symmetric separately radial.
\end{remark}
\begin{example}
	For $n=2$, we consider the symmetric separately radial function
	$$a_0(r_1,r_2)=r_1^2r_2^2$$
	The set of indices for the decomposition of the Bergman spaces $\mathcal{H}^2_{\alpha}(\mathbb{B}^2)$ associated to the group $S_2 \rtimes \mathbb{T}^2$ is given by
	$\mathcal{I}=\{(k_1,k_2): k_1\geq k_2\geq 0    \}.$
	The subspace of the Bergman spaces $\mathcal{H}^2_{\alpha}(\mathbb{B}^2)$ associated to $(k_1,k_2)\in \mathcal{I}$ is defined by
	$$\mathcal{P}_{(k_1,k_2)}(\mathbb{B}^n)=\left\{ \begin{array}{cc}
		\langle z_1^{k_1}z_2^{k_1}\rangle     & k_1=k_2  \\
		\langle z_1^{k_1}z_2^{k_2}, z_1^{k_2}z_2^{k_1} \rangle      & k_1 \neq k_2
	\end{array}  \right. $$
	where $\langle z_1^{k_1}z_2^{k_2}, z_1^{k_2}z_2^{k_1} \rangle$ is the space generated by these polynomials.
	Now, the spectral function of the Toeplitz operator with symbol $a_0$ is given by
	\begin{eqnarray*}
		\gamma_{a_0}(k_1,k_2)&=&\frac{2^2\Gamma(n+k_1+k_2 +\alpha +1 ) }{k_1!k_2!\Gamma(n +\alpha +1 )} \int_{\tau(\mathbb{B}^2)} r_1^{2k_1+3}r_2^{2k_2+3} (1-r_1^2-r_2^2)^{\alpha} dr_1 dr_2  \\
		& =&\frac{(k_1+1)(k_2+1) }{(n+k_1+k_2 +\alpha +2 )(n+k_1+k_2 +\alpha +1 )}.
	\end{eqnarray*}
	
	Note that if we consider $k_1,k_2,k_3 \in \mathbb{Z}_+$ with $k_1>k_2$ and $k_1+k_2=2k_3$ then $$\gamma_{a_0}(k_1,k_2)\neq\gamma_{a_0}(k_3,k_3)$$
	The above equation exemplifies the difference between the spectral functions associated with symmetric separately radial  and radial symbols respectively, since the spectral functions associated to a radial symbol are constant respect to the quantity $k_1+k_2$.
\end{example}

\begin{remark}
	It can be noted from the above example that the symmetric separately radial  symbols have a relationship with the symmetric polynomials, i.e, using the symmetric polynomials we can construct a family of symmetric separately radial functions which can be written as follows
	$$ a(z)=\hat{a}(P(\vert z_1\vert \ldots ,\vert z_n\vert))$$
	where $\hat{a}\in L^{\infty}(\mathbb{R}^n)$.
\end{remark}

\section{Toeplitz operators with alternating separately radial symbols}{\label{sec6}}

In the same way that we defined separately radial symmetric symbols in Section \ref{sec3}, we are going to define a new type of symbol.
We define an  alternating separately radial
function $a \in  L^{\infty}(\mathbb{B}^n)$ as one that can be written as follows
\begin{eqnarray}{\label{Symm:function}}
	a(z) = a(\vert z_1\vert, \ldots ,\vert z_n\vert)
\end{eqnarray}
and satisfies that
\begin{eqnarray}{\label{Alt:function}}
	a(\sigma(z))= a(z) \mbox{ where } \sigma \in A_n
\end{eqnarray}
for almost every $z \in \mathbb{B}^n$.

In other words, the function $a$ is alternating separately radial if and only if  it is $A_n\rtimes \mathbb{T}^n$-invariant.

The set of functions $A_n\rtimes \mathbb{T}^n$-invariant will be denoted by $\mathcal{A}_{A_n\rtimes \mathbb{T}^n}$.

The following result is the version of Theorem \ref{Teor8} in the case of a function which is a symbol
alternating separately radial, and its proof is completely analogous

\begin{theorem}
	The $C^*$-algebra generated by Toeplitz operators with alternating separately radial symbols
	is commutative, i. e. , for every $\alpha > -1$ the Toeplitz operator is  unitarily equivalent to
	
	$$RT_aR^*= \left(\bigoplus_{\iota \in \mathcal{I}^0} \gamma_a(\iota) I_{\mid_{\ell_2(\mathcal{I}_{\iota})}}\right)\bigoplus\left(\bigoplus_{\iota \in \mathcal{I}^c} \gamma_a(\iota) I_{\mid_{\ell(\mathcal{I}^+_{\iota})}}\oplus \gamma_a(\sigma(\iota)) I_{\mid_{\ell_2(\mathcal{I}^-_{\iota})}} \right),$$
	acting on
	$$ \ell^2(\mathbb{Z}_+^n)= \left(\bigoplus_{\iota \in \mathcal{I}^0} \ell^2(\mathcal{I}_{\iota})  \right)\bigoplus\left(\bigoplus_{\iota \in \mathcal{I}^c} \ell^2(\mathcal{I}^+_{\iota})\oplus \ell^2(\mathcal{I}^-_{\iota})\right),$$
	where $\sigma$ is odd and  the spectral function is given by
	$$\gamma_a(\iota)=\frac{2^n\Gamma(n+\vert \iota \vert +\alpha +1 ) }{\iota!\Gamma(n +\alpha +1 )} \int_{\tau(\mathbb{B}^n)} a(r) r^{2\iota}(1-\vert r \vert^2)^{\alpha} \prod_{k=1}^n r_k dr_k .$$
	and
	$$\gamma_a(\sigma(\iota))=\frac{2^n\Gamma(n+\vert \iota \vert +\alpha +1 ) }{\iota!\Gamma(n +\alpha +1 )} \int_{\tau(\mathbb{B}^n)} a(r) r^{2\sigma(\iota)}(1-\vert r \vert^2)^{\alpha} \prod_{k=1}^n r_k dr_k .$$
\end{theorem}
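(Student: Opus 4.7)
The plan is to mimic the proof of Theorem \ref{Teor8}, replacing the $S_n \rtimes \mathbb{T}^n$-invariance by $A_n \rtimes \mathbb{T}^n$-invariance and, at the final step, invoking Theorem \ref{6} in place of Theorem \ref{Teor3}. First, for $a \in \mathcal{A}_{A_n \rtimes \mathbb{T}^n}$ and $m \in \mathbb{Z}_+^n$, I would compute $\gamma_a(m) = \langle T_a e_m, e_m \rangle_\alpha$ exactly as in the symmetric case: the separately radial property $a(z) = a(|z_1|, \ldots, |z_n|)$ together with a change to polar coordinates on each complex axis produces the integral formula
$$\gamma_a(m) = \frac{2^n \Gamma(n + |m| + \alpha + 1)}{m!\, \Gamma(n + \alpha + 1)} \int_{\tau(\mathbb{B}^n)} a(r)\, r^{2m} (1-|r|^2)^\alpha \prod_{k=1}^n r_k\, dr_k.$$

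Next, for any $\sigma \in A_n$ I would perform the change of variable $z = \sigma(w)$ in this integral; combining $a(\sigma(w)) = a(w)$ with the identities $\sigma(m)! = m!$ and $|\sigma(m)| = |m|$ yields $\gamma_a(\sigma(m)) = \gamma_a(m)$ for every even $\sigma$. Consequently $\gamma_a$ is constant on each $A_n$-orbit $\mathcal{I}_\iota^+$ and $\mathcal{I}_\iota^-$. The subtle point distinguishing the two summands in the statement is as follows: when $\iota \in \mathcal{I}^0$ there are indices $l_1 \neq l_2$ with $\iota_{l_1} = \iota_{l_2}$, so the odd transposition $(l_1\; l_2)$ fixes $\iota$; thus the $S_n$-stabilizer of $\iota$ meets both parities, $A_n$ already acts transitively on the full $S_n$-orbit of $\iota$, and $\mathcal{I}_\iota^+ = \mathcal{I}_\iota^- = \mathcal{I}_\iota$, so $\gamma_a$ is in fact constant on all of $\mathcal{I}_\iota$. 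By contrast, for $\iota \in \mathcal{I}^c$ the orbits $\mathcal{I}_\iota^+$ and $\mathcal{I}_\iota^-$ are disjoint and $\gamma_a$ may take genuinely different values on them; the second integral formula in the statement is obtained by applying the same polar-coordinate computation to an odd representative $\sigma(\iota) \in \mathcal{I}_\iota^-$.

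Finally, combining these values with the orthogonal decomposition
$$\ell^2(\mathbb{Z}_+^n) = \left(\bigoplus_{\iota \in \mathcal{I}^0} \ell^2(\mathcal{I}_\iota)\right) \bigoplus \left(\bigoplus_{\iota \in \mathcal{I}^c} \ell^2(\mathcal{I}_\iota^+) \oplus \ell^2(\mathcal{I}_\iota^-)\right)$$
and applying Theorem \ref{6} to $T_a \in \mathrm{End}_{A_n \rtimes \mathbb{T}^n}(\mathcal{H}^2_\alpha(\mathbb{B}^n))$ (noting $T_a$ is intertwining by Proposition \ref{prop1} applied to $A_n \rtimes \mathbb{T}^n$) yields the announced diagonal form of $RT_a R^*$. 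The only genuinely new ingredient relative to Theorem \ref{Teor8} is the parity argument collapsing $\mathcal{I}_\iota^\pm$ to $\mathcal{I}_\iota$ for $\iota \in \mathcal{I}^0$; I expect no significant obstacle, as the rest is a direct transcription of the earlier calculation.
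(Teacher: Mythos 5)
Your proposal is correct and follows essentially the same route as the paper, which simply declares the proof ``completely analogous'' to Theorem~\ref{Teor8}: you redo the polar-coordinate computation of $\gamma_a(m)$ for an $A_n\rtimes\mathbb{T}^n$-invariant symbol, use the change of variable $z=\sigma(w)$ for even $\sigma$ to get constancy on $\mathcal{I}_\iota^{\pm}$, and then invoke Theorem~\ref{6} (in place of Theorem~\ref{Teor3}) together with the decomposition from Proposition~\ref{prop5}. Your explicit parity observation --- that for $\iota\in\mathcal{I}^0$ the stabilizer contains an odd transposition so $\mathcal{I}_\iota^+=\mathcal{I}_\iota^-=\mathcal{I}_\iota$, while for $\iota\in\mathcal{I}^c$ the two orbits are disjoint --- is exactly the content of the paper's relations preceding Proposition~\ref{prop5}, so no new ingredient is needed.
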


In particular, from the above result we obtain that for every Toeplitz operator with alternating separately radial symbol
is  unitarily equivalent to
$RT_aR^*= \gamma_a(m) I$
acting on $\ell^2(\mathbb{Z}_+^n)$, where

$$\gamma_a(m)=\frac{\Gamma(n+\vert m \vert +\alpha +1 ) }{m!\Gamma(n +\alpha +1 )} \int_{\tau(\mathbb{B}^n)} a(r) r^{2m}(1-\vert r \vert^2)^{\alpha} \prod_{k=1}^n r_k dr_k. $$

Then the spectral function satisfies that
\begin{enumerate}
	\item For $\iota \in \mathcal{I}^0$ we have
	$\gamma_a(\iota)=\gamma_a(m)$
	for every $m\in \mathcal{I}_{\iota}$, i.e.,
	$ \gamma_a(m)=\gamma_a(m')$ whenever  $m, m'\in \mathcal{I}_{\iota}$.
	\item For $\iota \in \mathcal{I}^c$ we have
	$\gamma_a(\iota)=\gamma_a(m)$
	for every $m\in \mathcal{I}_{\iota}^+$, i.e.,
	$ \gamma_a(m)=\gamma_a(m')$ whenever  $m, m'\in \mathcal{I}_{\iota}^+$.
	\item Consider $\sigma$ an odd permutation. For $\iota \in \mathcal{I}^c$  we have
	$\gamma_a(\sigma(\iota))=\gamma_a(m)$
	for every $m\in \mathcal{I}_{\iota}^-$, i.e.,
	$ \gamma_a(m)=\gamma_a(m')$ whenever  $m, m'\in \mathcal{I}_{\iota}^-$.
\end{enumerate}

\begin{remark}
	The alternating separately radial Toeplitz operators are more general than radial Toeplitz operators, i.e., every radial Toeplitz operator is alternating   separately radial.
	
We have the following chain of proper inclusions between the different sets of symbols defined in Section \ref{sec2}:
	$$ \mathcal{A}_{U(n)} \subsetneqq \mathcal{A}_{S_n \rtimes \mathbb{T}^n} \subsetneqq \mathcal{A}_{A_n \rtimes \mathbb{T}^n} \subsetneqq \mathcal{A}_{ \mathbb{T}^n}.  $$
	
	For the sake of completeness we will present explicit examples that prove some of the proper inclusions
	established previously.
	
	The symbol $a(r)= f(r_1)$ is an element in $\mathcal{A}_{ \mathbb{T}^n}$, but clearly does not belong to $\mathcal{A}_{ A_n \rtimes \mathbb{T}^n}$.
	
	The symbol $a(r_1,r_2,r_3) \in \mathcal{A}_{A_n \rtimes \mathbb{T}^n}$ given by:
	$$a(r_1,r_2,r_3)=f(r_1r_2^2 r_3^3-r_1^3r_2^2 r_3+r_2 r_3^2 r_1^3-r_2 r_2 r_3^3+r_3r_1^2 r_2^3-r_2r_1^2r_3^3   ),$$
	where $f$ is an odd function over $\mathbb{R}$,  does not belong to $\mathcal{A}_{S_n \rtimes \mathbb{T}^n}$, since
	$a(\sigma(r_1,r_2,r_3))= -a(r_1,r_2,r_3)$ when $\sigma$ is an odd permutation.

	We also have the following chain of proper inclusions between the different algebras generated by Toeplitz operators:
	$$ \mathcal{T}^{U(n)} \subsetneqq \mathcal{T}^{S_n \rtimes \mathbb{T}^{n}} \subsetneqq \mathcal{T}^{A_n \rtimes \mathbb{T}^{n}} \subsetneqq \mathcal{T}^{\mathbb{T}^{n}}.$$
\end{remark}

\section{Toeplitz operators with anti-symmetric separately radial symbols}{\label{sec7}}
Finally, we mention that what we proved for symmetric separately radial  and alternating separately radial  symbols also works for a new type of symbols: anti-symmetric separately radial symbols.

After enunciating the theorem that describes the explicit form of the spectral function for Toeplitz operators with antisymmetric symbols,
we will list a series of remarks about these symbols and the Toeplitz operators with such symbols.

We define an  anti-symmetric separately radial
function $a \in  L^{\infty}(\mathbb{B}^n)$ written as
\begin{eqnarray}\label{Symm:function}
	a(z) = a(\vert z_1\vert, \ldots ,\vert z_n\vert)
\end{eqnarray}
satisfying
\begin{eqnarray}\label{Anti:function}
	a(\sigma(z))= \mbox{Sgn}(\sigma)a(z) \mbox{ where } \sigma \in S_n
\end{eqnarray}
for almost every $z \in \mathbb{B}^n$.

In particular, every  anti-symmetric separately radial function is  alternating separately radial.

\begin{theorem}
	The $C^*$-algebra generated by Toeplitz operators with anti-symmetric separately radial symbols
	is commutative, i. e. , for every $\alpha > -1$ the Toeplitz operator is  unitarily equivalent to
	
	$$RT_aR^*= \left(\bigoplus_{\iota \in \mathcal{I}^0} \gamma_a(\iota) I_{\mid_{\ell^2(\mathcal{I}_{\iota})}}\right)\bigoplus\left(\bigoplus_{\iota \in \mathcal{I}^c} \gamma_a(\iota) I_{\mid_{\ell^2(\mathcal{I}^+_{\iota})}}\oplus \gamma_a(\sigma(\iota)) I_{\mid_{\ell^2(\mathcal{I}^-_{\iota})}} \right),$$
	acting on
	$$ \ell^2(\mathbb{Z}_+^n)= \left(\bigoplus_{\iota \in \mathcal{I}^0} \ell^2(\mathcal{I}_{\iota})  \right)\bigoplus\left(\bigoplus_{\iota \in \mathcal{I}^c} \ell^2(\mathcal{I}^+_{\iota})\oplus \ell^2(\mathcal{I}^-_{\iota})\right)$$
	where $\sigma$ is odd and  the spectral function satisfies that
	\begin{enumerate}
		\item  For $\iota \in \mathcal{I}^0$ we have $\gamma_a(\iota)=0$.
		\item  For $\iota \in \mathcal{I}^c$ we have $\gamma_a(\sigma(\iota))=-\gamma_a(\iota)$
	\end{enumerate}

	In particular, from the above result we conclude that every Toeplitz operator with anti-symmetric separately radial symbols
	is  unitarily equivalent to
	$RTR^*= \gamma_a(m) I$
	acting  on $\ell^2(\mathbb{Z}_+^n)$ where
	
	$$\gamma_a(m)=\frac{\Gamma(n+\vert m \vert +\alpha +1 ) }{m!\Gamma(n +\alpha +1 )} \int_{\tau(\mathbb{B}^n)} a(r) r^{2m}(1-\vert r \vert^2)^{\alpha} \prod_{k=1}^n r_k dr_k. $$
	
	Then the spectral function satisfies that
	\begin{enumerate}
		\item For $\iota \in \mathcal{I}^0$ we have
		$\gamma_a(m)=0$
		for every $m\in \mathcal{I}_{\iota}$.
		\item For $\iota \in \mathcal{I}^c$ we have
		$\gamma_a(\iota)=\gamma_a(m)$
		for every $m\in \mathcal{I}_{\iota}^+$.
		\item If $\sigma$ is an odd permutation, then for every $\iota \in \mathcal{I}^c$  we have
		$\gamma_a(m)=-\gamma_a(\iota)$
		for every $m\in \mathcal{I}_{\iota}^-$.
	\end{enumerate}
\end{theorem}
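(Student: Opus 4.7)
The plan is to reduce the statement to the alternating separately radial case treated in Section \ref{sec6}, and then exploit the antisymmetry relation to pin down the special values of the spectral function. First, I would observe that every anti-symmetric separately radial symbol $a$ is in particular alternating separately radial: for $\sigma \in A_n$ we have $\mbox{Sgn}(\sigma) = 1$, so (\ref{Anti:function}) collapses to $a(\sigma(z)) = a(z)$, i.e.\ the $A_n \rtimes \mathbb{T}^n$-invariance condition. By Proposition \ref{prop1} this places $T_a$ in $\mbox{End}_{A_n \rtimes \mathbb{T}^n}(\mathcal{H}^2_\alpha(\mathbb{B}^n))$, so the direct-sum decomposition of $RT_aR^*$ and the commutativity of the generated $C^*$-algebra follow immediately from the theorem of Section \ref{sec6}. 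The integral formula for $\gamma_a(m)$ is obtained by reproducing verbatim the matrix-coefficient computation carried out in Theorem \ref{Teor8}, which uses only the separate radiality of $a$.

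Next I would establish the vanishing on $\mathcal{I}^0$. For $\iota \in \mathcal{I}^0$ pick indices $l_1 \ne l_2$ with $\iota_{l_1} = \iota_{l_2}$ and let $\tau = (l_1\, l_2) \in S_n$, which is an odd permutation fixing $\iota$. In the explicit integral expression for $\gamma_a(\iota)$, perform the change of variable $z = \tau(w)$: the Jacobian is a permutation matrix of modulus one, the weight $(1-|z|^2)^\alpha$ is $S_n$-invariant, and the identity $|\tau(w)|^{2\tau(\iota)} = |w|^{2\iota}$ (derived exactly as in Section \ref{sec5}) combined with $\tau(\iota) = \iota$ leaves the integrand equal to a constant times $a(\tau(w))\, |w|^{2\iota}\, (1-|w|^2)^\alpha$. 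Now (\ref{Anti:function}) gives $a(\tau(w)) = -a(w)$, producing $\gamma_a(\iota) = -\gamma_a(\iota)$, hence $\gamma_a(\iota) = 0$.

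The sign-flipping relation on $\mathcal{I}^c$ proceeds by the same mechanism. For $\iota \in \mathcal{I}^c$ and any odd $\sigma \in S_n$, start from the integral for $\gamma_a(\sigma(\iota))$ and perform $z = \sigma(w)$; by the identity $|\sigma(w)|^{2\sigma(\iota)} = |w|^{2\iota}$ from Section \ref{sec5} the integral reduces to a constant times $\int a(\sigma(w))\, |w|^{2\iota}\, (1-|w|^2)^\alpha \, dv(w)$, and substituting $a(\sigma(w)) = \mbox{Sgn}(\sigma)\, a(w) = -a(w)$ produces $\gamma_a(\sigma(\iota)) = -\gamma_a(\iota)$. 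There is no serious obstacle: the argument is essentially mechanical and leans entirely on the antisymmetry relation (\ref{Anti:function}) together with the index bookkeeping already carried out in Section \ref{sec5}. The only point that requires a moment of care is to note that every $\iota \in \mathcal{I}^0$ admits an odd permutation, namely a transposition of two equal entries, that fixes it; this is immediate from the definition (\ref{I0}).
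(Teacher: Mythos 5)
Your proposal is correct and follows essentially the same route the paper intends: the paper gives no separate proof, relying on the observation that anti-symmetric implies alternating separately radial (so the block decomposition and commutativity come from the Section \ref{sec6} theorem) together with the change-of-variables computation of Section \ref{sec5} applied to the antisymmetry relation (\ref{Anti:function}). Your explicit verification that each $\iota\in\mathcal{I}^0$ is fixed by an odd transposition of two equal entries, forcing $\gamma_a(\iota)=-\gamma_a(\iota)=0$, and the analogous substitution giving $\gamma_a(\sigma(\iota))=-\gamma_a(\iota)$ on $\mathcal{I}^c$, is precisely the argument the paper leaves implicit.
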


\begin{remark}
	
	If $a$ is an alternating  separately radial function and $\sigma$ an odd permutation, then  $a_{\sigma}(z)=a(\sigma(z))$ is  an alternating  separately radial function.
\end{remark}

\begin{remark} If $a$ is an alternating  separately radial function and $\sigma, \beta$ are odd permutations, then  $a(\sigma(z))=a(\beta(z))$ and $a(\beta \sigma(z))=a(z)$.
\end{remark}

\begin{remark}
	If $a$ is an alternating  separately radial function and $\sigma$ an odd permutation, then
	$$ a^+=\frac{ a+a_{\sigma} }{2}$$ is a symmetric function.
\end{remark}
\begin{remark}
	If $a$ is an alternating  separately radial function and $\sigma$ an odd permutation, then
	$$ a^-=\frac{ a-a_{\sigma} }{2}$$ is an anti-symmetric function.
\end{remark}

\begin{remark} If $a$ is an alternating  separately radial function and $\sigma$ an odd permutation, then
	$ a=a^+ + a^-$.
	
\end{remark}

\begin{remark}
	Toeplitz operators with alternating separately radial symbols $T_a=T_{a^+}+T_{a^-}.$ are unitarily equivalent to
	$$RT_{a^+}R^*= \left(\bigoplus_{\iota \in \mathcal{I}^0} \gamma_a(\iota) I_{\mid_{l^2(\mathcal{I}_{\iota})}}\right)\oplus \bigoplus_{\iota \in \mathcal{I}^c}  \gamma_a(\iota) \left(  I_{\mid_{l^2(\mathcal{I}^+_{\iota})}}\oplus I_{\mid_{l^2(\mathcal{I}^-_{\iota})}} \right) ,$$
	and
	$$RT_{a^-}R^*= \left(\bigoplus_{\iota \in \mathcal{I}^0} 0_{\mid_{l^2(\mathcal{I}_{\iota})}}\right)\oplus \bigoplus_{\iota \in \mathcal{I}^c}  \gamma_a(\iota) \left(  I_{\mid_{l^2(\mathcal{I}^+_{\iota})}}\oplus (-I)_{\mid_{l^2(\mathcal{I}^-_{\iota})}} \right) .$$
\end{remark}



\end{document}